\newtheorem{theorem}{Theorem}[section]
\newtheorem{lemma}[theorem]{Lemma}
\theoremstyle{definition}
\newtheorem{definition}[theorem]{Definition}
\newtheorem{proposition}[theorem]{Proposition}
\newtheorem{corollary}[theorem]{Corollary}
\newtheorem{remark}[theorem]{Remark}
\newtheorem{example}[theorem]{Example}
\theoremstyle{remark}
\numberwithin{equation}{section}
\begin{document}

\title{\textbf{On $\Sigma$-skew reflexive-nilpotents-property for rings}}



\author{H\'ector Su\'arez}
\address{Universidad Pedag\'ogica y Tecnol\'ogica de Colombia, Sede Tunja}
\curraddr{Campus Universitario}
\email{hector.suarez@uptc.edu.co}

\author{Sebasti\'an Higuera}
\address{Universidad Nacional de Colombia - Sede Bogot\'a}
\curraddr{Campus Universitario}
\email{sdhiguerar@unal.edu.co}
\thanks{}

\author{Armando Reyes}
\address{Universidad Nacional de Colombia - Sede Bogot\'a}
\curraddr{Campus Universitario}
\email{mareyesv@unal.edu.co}
\thanks{}


\thanks{The authors were supported by the research fund of Department of Mathematics, Faculty of Science, Universidad Nacional de Colombia - Sede Bogot\'a, Colombia, HERMES CODE 52464.}

\subjclass[2020]{16S36, 16S80, 16U99, 16W20, 16N40}

\keywords{RNP ring, skew polynomial ring, skew PBW extension}

\date{}

\dedicatory{Dedicated to the memory of Professor Vyacheslav Artamonov}

\begin{abstract}

In this paper, we study the reflexive-nilpotents-property (briefly, RNP) for Ore extensions of injective type, and more generally, skew PBW extensions. With this aim, we introduce the notions of $\Sigma$-skew CN rings and $\Sigma$-skew reflexive (RNP) rings, for $\Sigma$ a finite family of ring endomorphisms of a ring $R$. Under certain conditions of compatibility, we study the transfer of the $\Sigma$-skew RNP property from a ring of coefficients to an Ore extension or skew PBW extension over this ring. We also consider this property for localizations of these noncommutative rings. Our results extend those corresponding presented by Bhattacharjee \cite{Bhattacharjee2020}.

\end{abstract}

\maketitle


\section{Introduction}\label{sect.prelim}

Throughout the paper, $\mathbb{N}$ and $\mathbb{Z}$ denote the set of natural numbers including zero and the ring of integers, respectively. The symbol $\Bbbk$ denotes a field, and $\Bbbk^{*}:= \Bbbk\  \backslash\ \{0\}$. Every ring is associative with identity unless otherwise stated. For a ring $R$, $N(R)$ denotes its set of nilpotent elements, $N_{*}(R)$ is its prime radical, and $N^{*}(R)$ is its upper nilradical (i.e. the sum of all nil ideals). It is well-known that $N^{*}(R)\subseteq N(R)$, and if the equality $N^{*}(R) = N(R)$ holds, then $R$ is called {\em NI} (Marks \cite{Marks2001}). Note that by definition, $R$ is NI if and only if $N(R)$ forms an ideal if and only if $R/N^{*}(R)$ is reduced, that is, $R/N^{*}(R)$ has no nonzero nilpotent elements. Hong and Kwak \cite[Corollary 13]{HongKwak2000}, showed that a ring $R$ is NI if and only if every minimal strongly prime ideal of $R$ is completely prime. If the equality $N_{*}(R) = N(R)$ holds, then $R$ is called 2-\textit{primal} (Birkenmeier et al. \cite{Birkenmeieretal1993}); equivalently, $N^{*}(R)$ is a completely semiprime ideal of $R$. 2-primal rings are clearly NI. The converse of this implication need not hold, but if $R$ is an NI ring of bounded index nilpotency, then $R$ is 2-primal \cite[Proposition 1.4]{Hwangetal2006}.

\vspace{0.2cm}

Following Cohn \cite{Cohn1999}, $R$ is said to be {\em reversible} if $ab = 0$ implies $ba = 0$, for $a, b\in R$. Commutative rings and reduced rings are clearly reversible. Reversible rings were studied under the name {\em zero commutative} by Habeb \cite{Habeb1990}. As a matter of fact, the class of NI rings contains nil rings and reversible rings. Lambek \cite{Lambek1971} called a ring $R$ {\em symmetric} provided $abc = 0$ implies $acb = 0$, for $a, b, c\in R$. Of course, commutative rings are symmetric, and symmetric rings are reversible, but the converses do not hold (\cite[Examples I.5 and II.5]{AndersonCamillo1999}, and \cite[Examples 5 and 7]{Marks2002}). We know that every reduced ring is symmetric \cite[Lemma 1.1]{Shin1973}, but the converse does not hold \cite[Example II.5]{AndersonCamillo1999}. Bell \cite{Bell1970} used the
term \emph{Insertion-of-Factors-Property} ({\em IFP}) for a ring $R$ if $ab = 0$ implies $aRb = 0$, for $a, b\in R$ (Narbonne \cite{Narbonne1982} and Habeb \cite{Habeb1990} used the terms {\em semicommutative} and {\em zero insertive}, respectively). Some results about IFP rings are due to Shin \cite{Shin1973}. Note that every reversible ring is IFP, but the converse need not hold \cite[Lemma 1.4 and Example 1.5]{KimLee2003}. Reversible rings are reflexive, and there exists a reflexive and IFP ring which is not symmetric \cite[Examples 5 and 7]{Marks2002}. In fact, a ring $R$ is reflexive and IFP if and only if $R$ is reversible \cite[Proposition 2.2]{KwakLee2012}. It is easy to see that IFP rings are 2-primal. Note that IFP rings are Abelian (every idempotent is central). 

\vspace{0.2cm}

In addition to reversible rings and their aforementioned generalizations of reduced rings, we also have the notion of reflexive ring. Mason \cite{Masonreflexiveideals} introduced the reflexive property for right ideals by defining a right ideal $I$ of a ring $R$ as \textit{reflexive} if for $a, b \in R$, $aRb \subseteq I$ implies $bRa \subseteq I$, and a ring $R$ is called \textit{reflexive} if the zero ideal of $R$ is reflexive (i.e. $aRb = 0$ implies $bRa = 0$, for $a, b\in R$). Equivalently, $R$ is reflexive if and only if $IJ = 0$ implies $JI = 0$, for ideals $I, J$ of $R$ \cite[Lemma 2.1]{KwakLee2012}. By a direct computation one can check that semiprime rings and reversible rings are reflexive, and every ideal of a fully idempotent ring $R$ (i.e. $I^2=I$, for every ideal of $R$) is reflexive (Courter \cite{Courter1969}). By \cite[Example 2.3]{KwakLee2012}, we know that the IFP and the reflexive ring properties do not imply each other. Kwak and Lee \cite{KwakLee2012} characterized the aspects of the reflexive and one-sided idempotent reflexive properties. They established a method by which a reflexive ring, which is not semiprime, can always be constructed from any semiprime ring, and showed that the reflexive property is Morita invariant. In the literature, different generalizations of reflexive rings have been formulated. Let us recall them.

\vspace{0.2cm}

Kim \cite{Kim2005} introduced the notion of idempotent reflexive ring as a generalization of reflexive ring. For a one-sided ideal $I$ of a ring $R$, $I$ is called {\em right idempotent reflexive} if $aRe\subseteq I$ implies $eRa\subseteq I$, for any $a, e^2 = e\in R$, and the ring $R$ is called {\em right idempotent reflexive} if the zero ideal is a right idempotent reflexive ideal. {\em Left idempotent reflexive ideals} and {\em left idempotent reflexive rings} are defined similarly. If a ring $R$ is both left and right idempotent reflexive, then $R$ is called an {\em idempotent reflexive ring}. For more details about these rings, see Kim and Baik \cite{KimBaik2006}. As one can check, reflexive rings and Abelian rings are idempotent reflexive. By \cite[Example 2.3(1)]{KwakLee2012}, we know that there exists an idempotent reflexive ring which is not reflexive. Similar to the case of reflexive rings, $R$ is right idempotent reflexive if and only if $IJ = 0$ implies $JI = 0$, for all right ideals $I, J$ of $R$ where $J$ is a right ideal generated by a subset of idempotents in $R$, if and only if $IJ = 0$ implies $JI = 0$, for all ideals $I, J$ of $R$ where $J$ is an ideal generated by a subset of idempotents in $R$ \cite[Lemma 3.4]{KwakLee2012}.

\vspace{0.2cm}

Kheradmand et al. \cite{Kheradmandetal2017} also introduced a generalization of reflexive rings. A ring $R$ is said to be \textit{RNP} ({\em reflexive-nilpotents-property}) if for $a,b \in N(R)$, $aRb = 0$ implies $bRa = 0$. Of course, reflexive rings are RNP but the converse need not hold \cite[Example 1.2(1)]{Kheradmandetal2017(b)}. In the same year, Kheradmand et al. \cite{Kheradmandetal2017(b)} defined a more general class than reflexive and RNP rings by considering nil ideals, and called a ring $R$ \textit{nil-reflexive} if $IJ = 0$ implies $JI = 0$, for nil ideals $I,J$ of $R$. As it is clear, reflexive rings are RNP and RNP rings are nil-reflexive, but the converse need not hold \cite[Example 1.2]{Kheradmandetal2017(b)}. They also showed that $R$ is a nil-reflexive ring if and only if $aRb = 0$ implies $bRa = 0$, for elements $a, b \in N^{*}(R)$ \cite[Proposition 2.1]{Kheradmandetal2017(b)}. Notice that the concepts of (nil)reflexive rings and NI rings are independent of each other \cite[Examples 1.5 and 2.9]{Kheradmandetal2017(b)}. Nevertheless, if $R$ is an NI ring, then the following assertions are equivalent: $R$ is nil-reflexive; $aRb = 0$, for $a, b\in N(R)$, implies $bRa = 0$; $IJ = 0$ implies $JI = 0$, for all nil right (or, left) ideals $I, J$ of $R$ \cite[Proposition 1.6]{Kheradmandetal2017(b)}.

\vspace{0.2cm}

In addition to the reduced rings and its generalizations described above, they have all been extended by ring endomorphisms. According to Krempa \cite{Krempa1996}, an endomorphism $\sigma$ of a ring $R$ is called {\em rigid} if $a\sigma(a) = 0$ implies $a=0$, for $a\in R$, and by Hong et al. \cite{Hongetal2000}, $R$ is called $\sigma$-{\em rigid} if there exists a rigid endomorphism $\sigma$ of $R$. Any rigid endomorphism of a ring $R$ is a monomorphism and $\sigma$-rigid rings are reduced rings \cite[Proposition 5]{Hongetal2000}. Following Ba\c{s}er et al. \cite{Baseretal2009} an endomorphism $\sigma$ of a ring $R$ is called {\em right skew reversible} if whenever $ab = 0$, for $a, b\in R$, $b\sigma(a) = 0$, and the ring $R$ is called {\em right} $\sigma$-{\em skew reversible} if there exists a right skew reversible endomorphism $\sigma$ of $R$. Similarly, left $\sigma$-skew reversible rings are defined. A ring $R$ is said to be $\sigma$-{\em skew reversible} if it is both right and left $\sigma$-reversible. It is important to say that $R$ is a $\sigma$-rigid ring if and only if $R$ is semiprime and right $\sigma$-skew reversible for a monomorphism $\sigma$ of $R$ \cite[Proposition 2.5 (iii)]{Baseretal2009}.

\vspace{0.2cm}

Kwak et al. \cite{Kwaketal2014} extended the reflexive property to the skewed reflexive property by ring endomorphisms. An endomorphism $\sigma$ of a ring $R$ is called \textit{right} (resp., \textit{left}) \textit{skew reflexive} if for
$a, b \in R$, $aRb = 0$ implies $bR\sigma(a) = 0$ (resp., $\sigma(b)Ra = 0$), and $R$ is called \textit{right} (resp., \textit{left}) \textit{$\sigma$-skew reflexive} if there exists a right (resp., left) skew reflexive endomorphism $\sigma$ of $R$. $R$ is said to be \textit{$\sigma$-skew reflexive} if it is both right and left $\sigma$-skew reflexive. It is clear that $\sigma$-rigid rings are right $\sigma$-skew reflexive. More precisely, a ring $R$ is reduced and right $\sigma$-skew reflexive for a monomorphism $\sigma$ of $R$ if and only if $R$ is $\sigma$-rigid \cite[Theorem 2.6]{Kwaketal2014}. Now, Bhattacharjee \cite{Bhattacharjee2020} extend the notion of RNP rings to ring endomorphisms $\sigma$ and introduced the notion of $\sigma$-skew RNP rings as a generalization of $\sigma$-skew reflexive rings. An endomorphism $\sigma$ of a ring $R$ is called \textit{right} (resp., \textit{left}) \textit{skew} RNP if for $a, b \in N(R)$, $aRb = 0$ implies $bR\sigma(a) = 0$ (resp., $\sigma(b)Ra = 0$). A ring $R$ is called \textit{right} (resp., \textit{left}) \textit{$\sigma$-skew} RNP if there exists a right (resp., left) skew RNP endomorphism $\sigma$ of $R$. $R$ is said to be \textit{$\sigma$-skew} RNP if it is both right and left $\sigma$-skew RNP. From \cite[Remark 1.2]{Bhattacharjee2020}, we know that reduced rings are $\sigma$-skew RNP for any endomorphism $\sigma$, and every right (resp., left) $\sigma$-skew reflexive ring is right (resp., left) $\sigma$-skew RNP. By \cite[Example 1.3]{Bhattacharjee2020}, we have that the notion of $\sigma$-skew RNP ring is not left-right symmetric. However, if $R$ is an RNP ring with an endomorphism $\sigma$, then $R$ is right $\sigma$-skew RNP if and only if $R$ is left $\sigma$-skew RNP. 

\vspace{0.2cm}

Taking into account our interest in noncommutative rings defined by endomorphisms, in this paper we will focus our attention on the study of ring-theoretical notions above for the skew polynomial rings (also known as Ore extensions) defined by Ore \cite{Ore1933}, and the skew PBW extensions introduced by Gallego and Lezama \cite{LezamaGallego}. As is well-known, skew polynomial rings are one of the most important families of noncommutative rings of polynomial type related with the study of quantum groups, differential operators, noncommutative algebraic geometry and noncommutative differential geometry (e.g. Brown and Goodearl \cite{BrownGoodearl2002}, Goodearl and Warfield \cite{GoodearlWarfield2004} or McConnell and Robson \cite{McConnellRobson2001}), and a lot of papers have been published with the aim of studying different theoretical properties of these objects. Now, regarding skew PBW extensions, their importance is that these objects generalize PBW extensions defined by Bell and Goodearl \cite{BellGoodearl1988}, families of differential operator rings, Ore extensions of injective type, several algebras appearing in noncommutative algebraic geometry, examples of quantum groups, and other families of noncommutative rings having PBW bases. Since its introduction, ring-theoretical and homological properties of skew PBW extensions have been studied by some people (e.g. Artamonov \cite{Artamonov2015}, Hamidizadeh et al. \cite{Hamidizadehetal2020}, Hashemi et al.  \cite{HashemiKhalilAlhevaz2017,HashemiKhalilAlhevaz2019,HashemiKhalilGhadiri2019}, Lezama et al.  \cite{Lezama2020,Lezama2021,LezamaGallego2016,LezamaGomez2019}, Tumwesigye et al. \cite{TumwesigyeRichterSilvestrov2019}, and the authors \cite{HigueraReyes2021,ReyesSuarez2019-1}, and \cite{Suarez2017}). As a matter of fact, a book that includes several of the works carried out for these extensions has been published (Fajardo et al. \cite{Fajardoetal2020}).

\vspace{0.2cm}

The paper is organized as follows. Section \ref{SPBW} introduces notation, and gives a brief account of the main results on skew PBW extensions appearing in \cite{Fajardoetal2020,LezamaGallego} which are required in the paper. In Section \ref{sect-Ore ext}, the notion of $\Sigma$-skew reflexive is introduced for a family $\Sigma$ of endomorphisms of a ring $R$ in order to generalize the notions of skew reflexive endomorphism and $\sigma$-skew reflexive ring, and to study different properties for this new family of rings. In Section \ref{sect-NocSigma}, we introduce the concept of skew RNP for a finite family of endomorphisms $\Sigma$ of a ring $R$, and the $\Sigma$-skew RNP rings as a generalization of $\sigma$-skew reflexive rings and $\sigma$-skew RNP rings, presented \cite{Kwaketal2014} and \cite{Bhattacharjee2020}, respectively. We study properties relating the $\Sigma$-skew RNP condition with different kinds of rings widely studied in the literature. Also, we present some results that generalize in some way the results presented in \cite{Bhattacharjee2020} for skew polynomial rings. Finally, we study the behavior of the $\Sigma$-skew RNP property for Ore localization by regular elements. In particular, we present a theorem that characterizes this property for the localization of skew PBW extensions.
\section{Preliminaries}\label{SPBW}

\begin{definition}(\cite[Definition 1]{LezamaGallego}) \label{def.skewpbwextensions}
Let $R$ and $A$ be rings. We say that $A$ is a \textit{skew PBW extension over} $R$ (the ring of coefficients), denoted $A=\sigma(R)\langle
x_1,\dots,x_n\rangle$, if the following conditions hold:
\begin{enumerate}
\item[\rm (1)]$R$ is a subring of $A$ sharing the same identity element.
\item[\rm (2)] there exist finitely many elements $x_1,\dots ,x_n\in A$ such that $A$ is a left free $R$-module, with basis the
set of standard monomials
\begin{center}
${\rm Mon}(A):= \{x^{\alpha}:=x_1^{\alpha_1}\cdots
x_n^{\alpha_n}\mid \alpha=(\alpha_1,\dots ,\alpha_n)\in
\mathbb{N}^n\}$.
\end{center}
Moreover, $x^0_1\cdots x^0_n := 1 \in {\rm Mon}(A)$.
\item[\rm (3)]For each $1\leq i\leq n$ and any $r\in R\ \backslash\ \{0\}$, there exists an
    element $c_{i,r}\in R\ \backslash\ \{0\}$ such that $x_ir-c_{i,r}x_i\in R$.
\item[\rm (4)]For $1\leq i,j\leq n$, there exists $d_{i,j}\in R\ \backslash\ \{0\}$ such that
\[
x_jx_i-d_{i,j}x_ix_j\in R+Rx_1+\cdots +Rx_n,
\]
i.e. there exist elements $r_0^{(i,j)}, r_1^{(i,j)}, \dotsc, r_n^{(i,j)} \in R$ with $x_jx_i - d_{i,j}x_ix_j = r_0^{(i,j)} + \sum_{k=1}^{n} r_k^{(i,j)}x_k$.
\end{enumerate}
\end{definition}
From the definition it follows that every non-zero element $f \in A$ can be uniquely expressed as $f = a_0 + a_1X_1 + \cdots + a_mX_m$, with $a_i \in R$ and $X_i \in \text{Mon}(A)$, for $0 \leq i \leq m$ \cite[Remark 2]{LezamaGallego}. For $X = x^{\alpha} = x_1^{\alpha_1}\cdots x_n^{\alpha_n} \in \text{Mon}(A)$, $\text{deg}(X) = |\alpha| := \alpha_1 + \cdots + \alpha_n$.

\vspace{0.2cm}

The next result establishes the importance of the (injective) endomorphisms of the ring of coefficients. As expected, later on we will make these endomorphisms interact with the different ring-theoretical notions of rings discussed in Section \ref{sect.prelim}.

\begin{proposition}(\cite[Proposition 3]{LezamaGallego}) \label{sigmadefinition}
If $A=\sigma(R)\langle x_1,\dots,x_n\rangle$ is a skew PBW extension, then there exist an injective endomorphism $\sigma_i:R\rightarrow R$ and a $\sigma_i$-derivation $\delta_i:R\rightarrow R$ such that $x_ir=\sigma_i(r)x_i+\delta_i(r)$, for each $1\leq i\leq n$, where $r\in R$.
\end{proposition}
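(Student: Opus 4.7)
The plan is to extract both $\sigma_i$ and $\delta_i$ directly from condition (3) of Definition \ref{def.skewpbwextensions}. That condition associates to each $r \in R \setminus \{0\}$ an element $c_{i,r} \in R \setminus \{0\}$ with $x_i r - c_{i,r} x_i \in R$; setting $d_{i,r} := x_i r - c_{i,r} x_i$, we obtain the identity $x_i r = c_{i,r} x_i + d_{i,r}$ in $A$. I would then define $\sigma_i(r) := c_{i,r}$ and $\delta_i(r) := d_{i,r}$ for $r \neq 0$, with $\sigma_i(0) := 0$ and $\delta_i(0) := 0$, so that the required relation $x_i r = \sigma_i(r) x_i + \delta_i(r)$ holds by construction for every $r \in R$.

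The first technical step is well-definedness: since $A$ is a left free $R$-module with basis $\mathrm{Mon}(A)$ by condition (2), and both $x_i$ and $1$ lie in this basis, the coefficients $(c_{i,r}, d_{i,r})$ in the expression $x_i r = c_{i,r} x_i + d_{i,r}\cdot 1$ are uniquely determined by $r$. Using this uniqueness, I would verify additivity by computing $x_i(r+s) = x_ir + x_is = (\sigma_i(r)+\sigma_i(s))x_i + (\delta_i(r)+\delta_i(s))$ and comparing to $\sigma_i(r+s)x_i + \delta_i(r+s)$, which yields $\sigma_i(r+s) = \sigma_i(r)+\sigma_i(s)$ and likewise for $\delta_i$. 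For the multiplicative structure, I would expand
\[
x_i(rs) = (x_i r)s = \sigma_i(r)\bigl(\sigma_i(s)x_i + \delta_i(s)\bigr) + \delta_i(r)s = \sigma_i(r)\sigma_i(s)\, x_i + \bigl(\sigma_i(r)\delta_i(s) + \delta_i(r)s\bigr),
\]
and comparing coefficients (again by the freeness) gives $\sigma_i(rs) = \sigma_i(r)\sigma_i(s)$ together with the twisted Leibniz identity $\delta_i(rs) = \sigma_i(r)\delta_i(s) + \delta_i(r)s$, so $\delta_i$ is a $\sigma_i$-derivation. The unit relation $\sigma_i(1) = 1$, $\delta_i(1) = 0$ follows from $x_i = x_i \cdot 1 = \sigma_i(1)x_i + \delta_i(1)$ and the same uniqueness argument.

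Finally, injectivity of $\sigma_i$ is immediate from condition (3), which demands $c_{i,r} = \sigma_i(r) \in R\setminus\{0\}$ whenever $r \neq 0$. The main obstacle in this argument is essentially a bookkeeping one: condition (3) is stated only for nonzero $r$, so one must confirm that the zero cases do not break additivity or multiplicativity. This is harmless because the identities become trivial when any argument vanishes, given our stipulation $\sigma_i(0) = \delta_i(0) = 0$. Beyond that, the proof is a mechanical unpacking of the defining conditions, with every nontrivial step reduced to invoking the left free $R$-module basis $\mathrm{Mon}(A)$ to compare coefficients.
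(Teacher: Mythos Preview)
Your argument is correct and is precisely the standard proof of this fact. The paper itself does not supply a proof here: Proposition~\ref{sigmadefinition} is quoted verbatim from \cite[Proposition~3]{LezamaGallego} and used as a black box, so there is no in-paper argument to compare against. Your extraction of $\sigma_i$ and $\delta_i$ from condition~(3), the appeal to the left $R$-basis ${\rm Mon}(A)$ for uniqueness of coefficients, and the coefficient comparisons yielding the ring-endomorphism and $\sigma_i$-derivation identities are exactly the argument given in the original reference. The only cosmetic remark is that your handling of the $r=0$ case and of sums $r+s=0$ with $r,s\neq 0$ is the right level of care; nothing further is needed.
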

We use the notation $\Sigma:=\{\sigma_1,\dots,\sigma_n\}$ and $\Delta:=\{\delta_1,\dots,\delta_n\}$ for the families of injective endomorphisms and $\sigma_i$-derivations of Proposition \ref{sigmadefinition}, respectively. For a skew PBW extension $A = \sigma(R)\langle x_1,\dotsc, x_n\rangle$ over $R$, we say that the pair $(\Sigma, \Delta)$ is a \textit{system of endomorphisms and $\Sigma$-derivations} of $R$ with respect to $A$. For $\alpha = (\alpha_1, \dots , \alpha_n) \in \mathbb{N}^n$, $\sigma^{\alpha}:= \sigma_1^{\alpha_1}\circ \cdots \circ \sigma_n^{\alpha_n}$, $\delta^{\alpha} := \delta_1^{\alpha_1} \circ \cdots \circ \delta_n^{\alpha_n}$, where $\circ$ denotes the classical composition of functions. 
\begin{definition}\label{quasicommutative}
Let $A=\sigma(R)\langle x_1,\dots,x_n\rangle$ be a skew PBW extension over a ring $R$.
\begin{itemize}
    \item[{\rm (1)}] \cite[Definition 4]{LezamaGallego} $A$ is called {\it quasi-commutative} if the conditions ${\rm (3)}$ and ${\rm (4)}$ presented above are replaced by the following: 
\begin{enumerate}
    \item[(3')] For every $1 \leq i \leq n$ and $r \in R \setminus \left \{0 \right \}$ there exists $c_{i,r} \in R \setminus \left \{0 \right \}$ such that
$x_ir = c_{i,r}x_i$.
\item[(4')] For every $1 \leq i, j \leq n$ there exists $d_{i,j} \in R \setminus \left \{0 \right \}$ such that $x_jx_i = d_{i,j}x_ix_j$.
\end{enumerate}
    \item[{\rm (2)}] \cite[Definition 4]{LezamaGallego} $A$ is said to be {\it bijective} if  $\sigma_i$ is bijective for each $1 \leq i \leq n$, and $d_{i,j}$ is invertible for any $1 \leq i <j \leq n$.
    \item [\rm (3)] \cite[Definition 2.3]{AcostaLezamaReyes2015} If $\sigma_i\in \Sigma$ is the identity map of $R$ for each $i = 1, \dots , n$, (we write $\sigma_i = {\rm id}_R$), we say that $A$ is a skew PBW extension of \textit{derivation type}. Similarly, if $\delta_i\in \Delta$ is zero, for every $i$, then $A$ is called a skew PBW extension of \textit{endomorphism type}.
\end{itemize}
\end{definition}
\begin{remark}\label{comparisonendomorphism}
Some relationships between skew polynomial rings and skew PBW extensions are the following:
\begin{itemize}
\item \cite[Theorem 2.3]{LezamaReyes2014} If $A=\sigma(R)\langle x_1,\dotsc, x_n\rangle$ is a quasi-commutative skew PBW extension over a ring $R$, then $A$ is isomorphic to an iterated skew polynomial ring of endomorphism type.
    \item \cite[Example 5(3)]{LezamaReyes2014} If $R[x_1;\sigma_1,\delta_1]\dotsb [x_n;\sigma_n,\delta_n]$ is an iterated skew polynomial ring such that $\sigma_i$ is injective, for $1\le i\le n$; $\sigma_i(r)$, $\delta_i(r)\in R$, for $r\in R$ and $1\le i\le n$; $\sigma_j(x_i)=cx_i+d$, for $i < j$, and $c, d\in R$, where $c$ has a left inverse; $\delta_j(x_i)\in R + Rx_1 + \dotsb + Rx_n$, for $i < j$, then $R[x_1;\sigma_1,\delta_1]\dotsb [x_n;\sigma_n, \delta_n] \cong \sigma(R)\langle x_1,\dotsc, x_n\rangle$. In general, skew polynomial rings of injective type are strictly contained in skew PBW extensions. This fact is not possible for PBW extensions. For instance, the quantum plane $\Bbbk\langle x, y\rangle / \langle xy - qyx\mid q\in \Bbbk^{*} \rangle$ is an Ore extension of injective type given by $\Bbbk[y][x;\sigma]$, where $\sigma(y) = qy$, but cannot be expressed as a PBW extension. 
\item \cite[Remark 2.4 (ii)]{SuarezChaconReyes2021} Skew PBW extensions of endomorphism type are more general than iterated skew polynomial rings of endomorphism type. With the aim of illustrating the differences between these structures, we consider the situations with two and three indeterminates. If we take the iterated skew polynomial ring of endomorphism type given by $R[x;\sigma_x][y;\sigma_y]$, by definition we have the following relations: $xr = \sigma_x(r)x$, $yr = \sigma_y(r)y$, and $yx = \sigma_y(x)y$, for any element $r\in R$. On the other hand, if we have $\sigma(R)\langle x, y\rangle$ a skew PBW extension of endomorphism type over $R$, then for any $r\in R$ we have the relations $xr=\sigma_1(r)x$, $yr=\sigma_2(r)y$, and $yx = d_{1,2}xy + r_0 + r_1x + r_2y$, for some elements $d_{1,2}, r_0, r_1$ and $r_2$ belong to $R$. When we compare the defining relations of both algebraic structures, it is clear which one of them is more general.

Now, if we have the iterated Ore extension $R[x;\sigma_x][y;\sigma_y][z;\sigma_z]$, then for any $r\in R$, $xr = \sigma_x(r)x$, $yr = \sigma_y(r)y$, $zr = \sigma_z(r)z$, $yx = \sigma_y(x)y$, $zx = \sigma_z(x)z$, $zy = \sigma_z(y)z$. For the skew PBW extension of endomorphism type $\sigma(R)\langle x, y, z\rangle$, we have the relations given by  $xr=\sigma_1(r)x$, $yr=\sigma_2(r)y$, $zr = \sigma_3(r)z$, $yx = d_{1,2}xy + r_0 + r_1x + r_2y + r_3z$, $zx = d_{1,3}xz + r_0' + r_1'x + r_2'y + r_3'z$, and $zy = d_{2,3}yz + r_0'' + r_1''x + r_2''y + r_3''z$, for some elements $d_{1,2}, d_{1,3}, d_{2,3}, r_0, r_0', r_0'', r_1, r_1', r_1'', r_2, r_2', r_2'', r_3$, $r_3', r_3''$ of $R$. Again, one can see the generality of the skew PBW extensions. Of course, as the number of indeterminates increases, the differences between both algebraic structures are more remarkable.
\end{itemize}
\end{remark}
\begin{example}
A great variety of algebras can be expressed as skew PBW extensions. Enveloping algebras of finite dimensional Lie algebras, families of differential operators rings, Weyl algebras, Ore extensions of injective type, some types of Auslander-Gorenstein rings, some skew Calabi Yau algebras, Artin-Schelter regular algebras, Koszul algebras, examples of quantum polynomials, some quantum universal enveloping algebras, and many other algebras of great interest in noncommutative algebraic geometry and noncommutative differential geometry illustrate the generality of skew PBW extensions. Details of these examples and others can be found in \cite{Fajardoetal2020, GomezSuarez2019,Suarez2017}.
\end{example}
The next proposition is very useful when one need to make some computations with elements of skew PBW extensions.
\begin{proposition}(\cite[Theorem 7]{LezamaGallego}) \label{coefficientes}
If $A$ is a polynomial ring with coefficients in $R$ with respect to the set of indeterminates $\{x_1,\dots,x_n\}$, then $A$ is a skew PBW extension over $R$ if and only if the following conditions hold:
\begin{enumerate}
\item[\rm (1)]for each $x^{\alpha}\in {\rm Mon}(A)$ and every $0\neq r\in R$, there exist unique elements $r_{\alpha}:=\sigma^{\alpha}(r)\in R\ \backslash\ \{0\}$, $p_{\alpha ,r}\in A$, such that $x^{\alpha}r=r_{\alpha}x^{\alpha}+p_{\alpha, r}$, where $p_{\alpha ,r}=0$, or $\deg(p_{\alpha ,r})<|\alpha|$ if $p_{\alpha , r}\neq 0$. If $r$ is left invertible, so is $r_\alpha$.
\item[\rm (2)]For each $x^{\alpha},x^{\beta}\in {\rm Mon}(A)$, there exist unique elements $d_{\alpha,\beta}\in R$ and $p_{\alpha,\beta}\in A$ such that $x^{\alpha}x^{\beta} = d_{\alpha,\beta}x^{\alpha+\beta}+p_{\alpha,\beta}$, where $d_{\alpha,\beta}$ is left invertible, $p_{\alpha,\beta}=0$, or $\deg(p_{\alpha,\beta})<|\alpha+\beta|$ if
$p_{\alpha,\beta}\neq 0$.
\end{enumerate}
\end{proposition}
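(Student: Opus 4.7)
The plan is to prove each direction by induction on total degree, exploiting the left $R$-basis property of $\mathrm{Mon}(A)$ for all uniqueness claims.

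For the implication ``skew PBW extension $\Rightarrow$ (1) and (2)'', the base case of (1) is $|\alpha| = 1$, which is exactly Proposition \ref{sigmadefinition}: $x_i r = \sigma_i(r) x_i + \delta_i(r)$, so $r_{e_i} = \sigma_i(r) \in R \setminus \{0\}$ by injectivity of $\sigma_i$, and $p_{e_i, r} = \delta_i(r) \in R$ satisfies the degree bound. For the inductive step with $\alpha = \gamma + e_i$, I would write $x^{\alpha} r = x_i(x^{\gamma} r)$, apply the inductive hypothesis to rewrite $x^{\gamma} r = \sigma^{\gamma}(r) x^{\gamma} + p_{\gamma, r}$, then use the base case to commute $x_i$ past the coefficient $\sigma^{\gamma}(r)$, and finally apply the inductive hypothesis once more to reduce the degree of the term $x_i p_{\gamma, r}$. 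The leading contribution assembles to $\sigma_i(\sigma^{\gamma}(r)) x^{\alpha} = \sigma^{\alpha}(r) x^{\alpha}$, while every other contribution has degree strictly less than $|\alpha|$. Uniqueness follows immediately from the $R$-basis property. The left-invertibility claim is handled inductively using that each $\sigma_i$ is a ring homomorphism, so $\sigma_i$ preserves one-sided inverses. Statement (2) is proved by a parallel induction on $|\alpha| + |\beta|$: the base case $\alpha = e_j$, $\beta = e_i$ is axiom (4) of Definition \ref{def.skewpbwextensions}, and longer products are reduced by factoring off a single variable and invoking (1) together with the inductive hypothesis to rewrite intermediate products in standard form.

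For the converse, I would verify the four axioms of Definition \ref{def.skewpbwextensions}. Axioms (1) and (2) are given by the polynomial-ring hypothesis. Axiom (3) follows by specializing (1) to $\alpha = e_i$: the remainder $p_{e_i, r}$ has degree strictly less than $1$ and therefore lies in $R$, so setting $c_{i, r} := r_{e_i}$ gives $x_i r - c_{i, r} x_i \in R$ with $c_{i, r} \neq 0$. Axiom (4) follows by specializing (2) to $\alpha = e_j$, $\beta = e_i$: using the standard ordering convention $x^{e_i + e_j} = x_i x_j$, one reads off $x_j x_i = d_{e_j, e_i} x_i x_j + p_{e_j, e_i}$ with $\deg(p_{e_j, e_i}) < 2$, which gives exactly the required membership in $R + Rx_1 + \cdots + Rx_n$ after setting $d_{i,j} := d_{e_j, e_i}$.

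The principal obstacle is the bookkeeping in the inductive step of the forward direction: when $x_i$ is pushed across $p_{\gamma, r}$, the degree temporarily climbs to $|\alpha|$ before being reduced by further applications of (1), so one must carefully track which summands actually contribute to the leading coefficient at degree $|\alpha|$. Proving that the only surviving top-degree term is $\sigma^{\alpha}(r) x^{\alpha}$, rather than a combination of terms arising from the intermediate expansion, is what pins down the identification $r_{\alpha} = \sigma^{\alpha}(r)$; once the left $R$-freeness of $A$ on $\mathrm{Mon}(A)$ is invoked at each stage, the degree bounds and uniqueness drop out cleanly.
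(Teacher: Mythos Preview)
The paper does not supply its own proof of this proposition; it is quoted verbatim from \cite[Theorem 7]{LezamaGallego}, so there is nothing to compare your argument against here. Your outline is the standard one and is correct in spirit, but two points in the forward direction are not fully under control.

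First, the inductions for (1) and (2) cannot be run independently. In your inductive step for (1) you write $x^{\alpha}r = x_i(x^{\gamma}r)$ and then must straighten $x_i p_{\gamma,r}$. After pushing $x_i$ past the scalar coefficients you are left with products of the form $x_i x^{\delta}$ with $|\delta|<|\gamma|$, and these are \emph{not} standard monomials in general (the entries of $\delta$ below index $i$ need not vanish). Rewriting $x_i x^{\delta}$ in the basis ${\rm Mon}(A)$ is precisely an instance of (2), not of (1); ``further applications of (1)'' do not suffice. The clean fix is to prove (1) and (2) by a simultaneous induction on total degree, so that when you are establishing (1) at level $|\alpha|$ you may already invoke (2) for all products of total degree $<|\alpha|$.

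Second, you never address why $d_{\alpha,\beta}$ is left invertible. The axiom in Definition~\ref{def.skewpbwextensions}(4) only asserts $d_{i,j}\in R\setminus\{0\}$, and nonzero does not imply left invertible. The missing observation is that axiom (4) is stated for \emph{all} pairs $(i,j)$: substituting $x_jx_i = d_{i,j}x_ix_j + (\text{degree}\le 1)$ into $x_ix_j = d_{j,i}x_jx_i + (\text{degree}\le 1)$ and comparing against the basis element $x_ix_j$ yields $d_{j,i}d_{i,j}=1$, so each $d_{i,j}$ is left invertible. The inductive $d_{\alpha,\beta}$ is then a product of images of such elements under the $\sigma_k$, and ring endomorphisms preserve left inverses; this is the same mechanism you invoke for $r_{\alpha}$, but it needs the base case just described, which is not automatic from the definition.
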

We need to establish a criterion which allows us to extend the family $\Sigma$ of injective endomorphisms and the family of $\Sigma$-derivations $\Delta$ of the ring $R$, to any skew PBW extension $A$ over $R$ (c.f. Artamonov \cite{Artamonov2015} and Venegas \cite{Venegas2015} who presented a study of derivations and automorphisms of skew PBW extensions, respectively). With this aim, for the next result consider the pair $(\Sigma, \Delta)$, the system of endomorphisms and $\Sigma$-derivations of $R$ with respect to $A$.
\begin{proposition}(\cite[Theorem 5.1]{ReyesSuarez2018-3}) \label{prop-indicsigma}
Let $A = \sigma(R)\langle x_1,\dotsc, x_n\rangle$ be a skew PBW extension over a ring $R$. Suppose that $\sigma_i\delta_j=\delta_j\sigma_i,\ \delta_i\delta_j=\delta_j\delta_i$, and $\delta_k(d_{i,j}) =
\delta_k(r_l^{(i,j)}) = 0$, for $1\le i, j, k, l\le n$, where $d_{i,j}$
and $r_l^{(i,j)}$ are the elements established in Definition
\ref{def.skewpbwextensions}. Consider an element  $f=a_0 + a_1X_1+\dotsb + a_mX_m\in A$. If $\overline{\sigma_{k}}:A\to A$ and $\overline{\delta_k}:A\to A$ are the functions given by $\overline{\sigma_{k}}(f):=\sigma_k(a_0)+\sigma_k(a_1)X_1 + \dotsb + \sigma_k(a_m)X_m$ and $\overline{\delta_k}(f):=\delta_k(a_0) +
\delta_k(a_1)X_1 + \dotsb + \delta_k(a_m)X_m$, respectively, and $\overline{\sigma_k}(r):=\sigma_k(r)$, for every $1\le k\le n$ and $r\in R$, then $\overline{\sigma_k}$ is an injective endomorphism of $A$ and $\overline{\delta_k}$ is a $\overline{\sigma_k}$-derivation of $A$, for each $k$.
\end{proposition}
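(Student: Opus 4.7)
The plan is to verify, in sequence, well-definedness, additivity, multiplicativity, and injectivity of $\overline{\sigma_k}$, followed by the twisted Leibniz rule for $\overline{\delta_k}$. Well-definedness of both maps is immediate from the uniqueness of the representation $f = a_0 + a_1 X_1 + \dotsb + a_m X_m$ furnished by Proposition \ref{coefficientes}, and additivity is transparent from the coefficient-wise definition. Injectivity of $\overline{\sigma_k}$ also drops out at once: if $\overline{\sigma_k}(f) = 0$, then uniqueness of coefficients forces each $\sigma_k(a_i) = 0$, and then each $a_i = 0$ because $\sigma_k$ is injective on $R$ by Proposition \ref{sigmadefinition}.

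For multiplicativity, I would reduce to checking $\overline{\sigma_k}\bigl((a x^\alpha)(b x^\beta)\bigr) = \overline{\sigma_k}(a x^\alpha)\,\overline{\sigma_k}(b x^\beta)$ on monomials and argue by induction on $|\alpha|+|\beta|$. The computation splits into two moves: iterating $x_i r = \sigma_i(r) x_i + \delta_i(r)$ from Proposition \ref{sigmadefinition} to slide $b$ past $x^\alpha$, and then expanding $x^\alpha x^\beta = d_{\alpha,\beta} x^{\alpha+\beta} + p_{\alpha,\beta}$ from Proposition \ref{coefficientes} with $\deg p_{\alpha,\beta} < |\alpha|+|\beta|$. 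The first move requires $\sigma_k$ to commute with every $\sigma_i$ and with every $\delta_i$ (commutativity of the $\delta_i$'s being one of the hypotheses), while the second move is absorbed by the induction hypothesis applied to $p_{\alpha,\beta}$ and to the lower-degree correction $p_{\alpha,b}$ produced by the first move.

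For the derivation property $\overline{\delta_k}(fg) = \overline{\delta_k}(f)\,g + \overline{\sigma_k}(f)\,\overline{\delta_k}(g)$, I would again reduce to monomials $f = a x^\alpha$, $g = b x^\beta$ and induct on $|\alpha|+|\beta|$. After sliding $b$ past $x^\alpha$, one must apply $\overline{\delta_k}$ to expressions involving $\sigma^\alpha(b)$, $d_{\alpha,\beta}$, and the lower-degree terms $p_{\alpha,b}$, $p_{\alpha,\beta}$. The hypotheses $\delta_k \sigma_i = \sigma_i \delta_k$ and $\delta_k \delta_i = \delta_i \delta_k$ let $\overline{\delta_k}$ pass through $\sigma^\alpha$ as a genuine $\sigma^\alpha$-derivation on $R$, while the conditions $\delta_k(d_{i,j}) = 0 = \delta_k(r_l^{(i,j)})$ are precisely what annihilate the potentially obstructing derivatives of the structure constants appearing in $p_{\alpha,\beta}$, so that the induction step matches the expected twisted Leibniz expansion on both sides.

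The main obstacle, in my view, is the inductive bookkeeping for sliding coefficients past $x^\alpha$ and for expanding $x^\alpha x^\beta$: each use of $x_i r = \sigma_i(r) x_i + \delta_i(r)$ produces a lower-degree correction term, and only after iterating the relation — combined with Proposition \ref{coefficientes} — do the shapes of $p_{\alpha,r}$ and $p_{\alpha,\beta}$ become concrete enough to manipulate. The hypotheses on $\Sigma$ and $\Delta$ are exactly tuned so that, at every step, applying $\overline{\sigma_k}$ or $\overline{\delta_k}$ to a relation inside $R$ still yields a relation of the same form, which is what lets the induction close cleanly without introducing any further structure constants outside the kernel of $\delta_k$.
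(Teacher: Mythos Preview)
The paper does not supply its own proof of this proposition; it is quoted from \cite[Theorem~5.1]{ReyesSuarez2018-3}, so there is no in-paper argument to compare your sketch against. Your overall architecture---well-definedness from the unique expansion of Proposition~\ref{coefficientes}, then induction on total degree for multiplicativity and for the twisted Leibniz rule---is the standard and correct one, and your treatment of $\overline{\delta_k}$ is fully supported by the listed hypotheses.

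There is, however, a genuine gap in the multiplicativity step for $\overline{\sigma_k}$. You write that sliding $b$ past $x^\alpha$ ``requires $\sigma_k$ to commute with every $\sigma_i$'', and this is exactly what the computation demands: from $x_i r = \sigma_i(r)x_i + \delta_i(r)$ one gets $\overline{\sigma_k}(x_i r) = \sigma_k\sigma_i(r)\,x_i + \sigma_k\delta_i(r)$, whereas $\overline{\sigma_k}(x_i)\,\overline{\sigma_k}(r) = x_i\,\sigma_k(r) = \sigma_i\sigma_k(r)\,x_i + \delta_i\sigma_k(r)$, so matching leading coefficients forces $\sigma_k\sigma_i = \sigma_i\sigma_k$. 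But this commutation is \emph{not} among the stated hypotheses; only $\sigma_i\delta_j = \delta_j\sigma_i$ and $\delta_i\delta_j = \delta_j\delta_i$ are assumed. The same obstruction appears at the level of the relations $x_jx_i = d_{i,j}x_ix_j + r_0^{(i,j)} + \sum_l r_l^{(i,j)}x_l$: applying $\overline{\sigma_k}$ coefficient-wise yields $\sigma_k(d_{i,j})x_ix_j + \sigma_k(r_0^{(i,j)}) + \sum_l \sigma_k(r_l^{(i,j)})x_l$, while $\overline{\sigma_k}(x_j)\,\overline{\sigma_k}(x_i) = x_jx_i$ returns the original constants, so one also needs $\sigma_k(d_{i,j}) = d_{i,j}$ and $\sigma_k(r_l^{(i,j)}) = r_l^{(i,j)}$, again absent here. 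Either the proposition as transcribed in this paper is missing hypotheses present in the original source, or $\overline{\sigma_k}$ is a ring endomorphism only under these extra assumptions. Your sketch should flag this explicitly rather than treating the commutation of the $\sigma_i$ and the $\sigma_k$-invariance of the structure constants as if they were among the given conditions.
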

As we said in Section \ref{sect.prelim}, Krempa \cite{Krempa1996} introduced the notion of rigidness as a generalization of reduced rings. Now, if $R$ is a ring and $\Sigma=\{\sigma_1,\dots,\sigma_n\}$ is a family of endomorphisms of $R$, then $\Sigma$ is called a \emph{rigid endomorphisms family} if $a\sigma^{\alpha}(a)= 0$ implies $a = 0$, where $a\in R$ and $\alpha\in \mathbb{N}^n$. $R$ is called $\Sigma$-\emph{rigid} if there exists a rigid endomorphisms family $\Sigma$ of $R$ \cite{ReyesSuarez2018-3}, Definition 3.1. Following Annin \cite{Annin2004} or Hashemi and Moussavi \cite{HashemiMoussavi2005}, if $R$ is a ring, $\sigma$ is an endomorphism of $R$, and $\delta$ is a $\sigma$-derivation of $R$, then $R$ is said to be $\sigma$-{\em compatible} if for each $a, b\in R$, $ab = 0$ if and only if $a\sigma(b)=0$; in this case is said that $\sigma$ is {\em  compatible} (necessarily, the endomorphism $\sigma$ is injective). $R$ is called $\delta$-{\em compatible} if for each $a, b\in R$, $ab = 0$ implies $a\delta(b)=0$. If $R$ is both $\sigma$-compatible and $\delta$-compatible, $R$ is called ($\sigma,\delta$)-{\em compatible}. The corresponding notion of compatibility, and an even weaker notion, have been formulated for skew PBW extensions as the following definition shows.
\begin{definition}
Consider a ring $R$ with finite families of endomorphisms $\Sigma$ and $\Sigma$-derivations $\Delta$, respectively. 
\begin{enumerate}
    \item [\rm (1)] (\cite[Definition 3.1]{HashemiKhalilAlhevaz2017}; \cite[Definition 3.2]{ReyesSuarez2018RUMA}) $R$ is said to be {\it $\Sigma$-compatible} if for each $a, b \in R$, $a\sigma^{\alpha}(b) = 0$ if and only if $ab = 0$, where $\alpha \in \mathbb{N}^n$. In this case, we say that $\Sigma$ is a {\it compatible family} of endomorphisms of $R$. $R$ is said to be {\it $\Delta$-compatible} if for each $a, b \in R$, it follows that $ab = 0$ implies $a\delta^{\beta}(b)=0$, where $\beta \in \mathbb{N}^n$. If $R$ is both $\Sigma$-compatible and $\Delta$-compatible, then $R$ is called {\it  $(\Sigma, \Delta)$-compatible}.
    \item [\rm (2)] \cite[Definition 4.1]{ReyesSuarez2019-2} $R$ is said to be {\it weak $\Sigma$-compatible} if for each $a, b \in R$, $a\sigma^{\alpha}(b)\in N(R)$ if and only if $ab \in N(R)$, where $\alpha \in \mathbb{N}^n$. $R$ is said to be {\it weak $\Delta$-compatible} if for each $a, b \in R$, $ab \in N(R)$ implies $a\delta^{\beta}(b)\in N(R)$, where $\beta \in \mathbb{N}^n$. If $R$ is both weak $\Sigma$-compatible and weak $\Delta$-compatible, then $R$ is called {\it weak $(\Sigma, \Delta)$-compatible}.
\end{enumerate}
\end{definition}
Ring-theoretical properties of skew PBW extensions over compatible rings have been investigated by the authors \cite{ReyesSuarez2019-1,ReyesSuarez2019Radicals}.
\begin{remark}\label{remark-comsiiSigma}
It is straightforward to prove that if $\Sigma$ is a finite family of endomorphisms of a ring $R$, then $R$ is $\Sigma$-compatible if and only if $R$ is $\sigma_i$-compatible, for every $1\leq i\leq n$.
\end{remark}
\begin{lemma}{\rm (}\cite[Lemma 3.3]{HashemiKhalilAlhevaz2017}; \cite[Proposition 3.8]{ReyesSuarez2018RUMA}{\rm )}\label{colosss}
Let $R$ be a $(\Sigma, \Delta)$-compatible ring. For every $a, b \in R$, we have the following:
\begin{enumerate}
\item [\rm (1)] If $ab=0$, then $a\sigma^{\theta}(b) = \sigma^{\theta}(a)b=0$, where $\theta\in \mathbb{N}^{n}$.
\item [\rm (2)] If $\sigma^{\beta}(a)b=0$, for some $\beta\in \mathbb{N}^{n}$, then $ab=0$.
\item [\rm (3)] If $ab=0$, then $\sigma^{\theta}(a)\delta^{\beta}(b)= \delta^{\beta}(a)\sigma^{\theta}(b) = 0$, where $\theta, \beta\in \mathbb{N}^{n}$.
\end{enumerate}
\end{lemma}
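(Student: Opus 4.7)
The plan is to deduce all three assertions from the defining compatibility conditions together with two standing facts: each $\sigma_i$ is injective (setting $a=1$ in the $\Sigma$-compatibility biconditional forces $\sigma^{\alpha}(b)=0 \iff b=0$, so each $\sigma^{\alpha}$ is injective), and each $\delta_i$ satisfies the Leibniz rule $\delta_i(xy)=\sigma_i(x)\delta_i(y)+\delta_i(x)y$ as a $\sigma_i$-derivation.

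For part (1), assume $ab=0$. The relation $a\sigma^{\theta}(b)=0$ is immediate from $\Sigma$-compatibility. For $\sigma^{\theta}(a)b=0$, I first apply the ring endomorphism $\sigma^{\theta}$ to $ab=0$, obtaining $\sigma^{\theta}(a)\sigma^{\theta}(b)=0$, and then invoke the $\Sigma$-compatibility biconditional with $\sigma^{\theta}(a)$ in the role of $a$: the equivalence $\sigma^{\theta}(a)\sigma^{\theta}(b)=0 \iff \sigma^{\theta}(a)b=0$ yields the result. Part (2) is the same trick in reverse: the biconditional turns $\sigma^{\beta}(a)b=0$ into $\sigma^{\beta}(ab)=\sigma^{\beta}(a)\sigma^{\beta}(b)=0$, and injectivity of $\sigma^{\beta}$ forces $ab=0$.

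Part (3) is the delicate one. The first conclusion $\sigma^{\theta}(a)\delta^{\beta}(b)=0$ falls out quickly: $\Delta$-compatibility gives $a\delta^{\beta}(b)=0$, and part (1) then promotes the left factor to $\sigma^{\theta}(a)$. The expected obstruction is the second equality $\delta^{\beta}(a)\sigma^{\theta}(b)=0$, because $\Delta$-compatibility places $\delta$ only on the right argument, whereas here it appears on the left. I plan to bypass this by first establishing the auxiliary implication ``$ab=0 \Rightarrow \delta^{\beta}(a)\,b=0$'', after which part (1) immediately yields $\delta^{\beta}(a)\sigma^{\theta}(b)=0$. That auxiliary implication is proved by induction on $|\beta|$ via the Leibniz rule: applying $\delta_i$ to a relation $\delta^{\beta}(a)b=0$ gives $\sigma_i(\delta^{\beta}(a))\delta_i(b)+\delta_i(\delta^{\beta}(a))b=0$, while $\Delta$-compatibility on the inductive hypothesis produces $\delta^{\beta}(a)\delta_i(b)=0$; part (1) then annihilates the first summand and leaves $\delta_i(\delta^{\beta}(a))b=0$. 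Iterating across the coordinates of $\beta$ closes the induction. The main obstacle is exactly this asymmetry in the definition of $\Delta$-compatibility, which the Leibniz rule overcomes.
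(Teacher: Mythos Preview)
The paper does not supply its own proof of this lemma; it is quoted verbatim from \cite[Lemma 3.3]{HashemiKhalilAlhevaz2017} and \cite[Proposition 3.8]{ReyesSuarez2018RUMA}, so there is nothing in the present text to compare against. Your argument is correct and self-contained: the reductions for (1) and (2) via the $\Sigma$-compatibility biconditional and the injectivity of $\sigma^{\alpha}$ are standard, and your handling of the asymmetric case $\delta^{\beta}(a)\sigma^{\theta}(b)=0$ in (3) is the right idea. The key step, proving that $cb=0$ implies $\delta_i(c)b=0$ by expanding $\delta_i(cb)=\sigma_i(c)\delta_i(b)+\delta_i(c)b$ and killing the first summand with $\Delta$-compatibility plus part (1), is exactly how the cited sources proceed as well, so your proof is in line with the literature.
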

\begin{lemma}\label{lemma-NilSigma-comp}
If $R$ is a $\Sigma$-compatible ring, $a,b\in R$ and $\alpha, \beta\in \mathbb{N}^n$, then the following assertions are equivalent:
\begin{enumerate}
\item[\rm (1)] $ab\in N(R)$.
\item[\rm (2)] $a\sigma^{\alpha}(b)\in N(R)$.
\item[\rm (3)] $\sigma^{\alpha}(b)a\in N(R)$.
\item[\rm (4)] $\sigma^{\alpha}(a)b\in N(R)$.
\item[\rm (5)] $b\sigma^{\alpha}(a)\in N(R)$.
\item[\rm (6)] $\sigma^{\alpha}(a)\sigma^{\beta}(b)\in
N(R)$.
\end{enumerate}
\end{lemma}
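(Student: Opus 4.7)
The plan is to isolate an enhanced compatibility statement for two-factor zero products, lift it to powers via a telescoping induction, and then deduce all six equivalences using the classical swap $xy\in N(R)\iff yx\in N(R)$.

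First I would establish the following ``enhanced compatibility'' (Fact~A): for any $x,y\in R$ and $\alpha,\beta\in\mathbb{N}^n$,
\[
xy=0\ \iff\ x\sigma^{\alpha}(y)=0\ \iff\ \sigma^{\alpha}(x)y=0\ \iff\ \sigma^{\alpha}(x)\sigma^{\beta}(y)=0.
\]
The first biconditional is $\Sigma$-compatibility by definition. Setting $x=1$ in it forces each $\sigma^{\alpha}$ to be injective. For the middle equivalence, apply $\sigma^{\alpha}$ to $xy=0$ to get $\sigma^{\alpha}(x)\sigma^{\alpha}(y)=0$, then reuse compatibility (with left factor $\sigma^{\alpha}(x)$ and right factor $y$) to swap $\sigma^{\alpha}(y)$ for $y$; the reverse uses injectivity of $\sigma^{\alpha}$. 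The last biconditional is another direct application of compatibility to the pair $(\sigma^{\alpha}(x),y)$.

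The central step is (1)$\iff$(2): $(ab)^{k}=0 \iff (a\sigma^{\alpha}(b))^{k}=0$. I would prove this by a finite induction on $j\in\{0,1,\dots,k\}$ along the invariant $(ab)^{k-j}(a\sigma^{\alpha}(b))^{j}=0$. The base $j=0$ is the hypothesis. For the step, factor
\[
(ab)^{k-j}(a\sigma^{\alpha}(b))^{j}=\bigl[(ab)^{k-j-1}a\bigr]\cdot\bigl[b\cdot(a\sigma^{\alpha}(b))^{j}\bigr]
\]
and apply Fact~A once to push $\sigma^{\alpha}$ onto the right block, producing
\[
(ab)^{k-j-1}a\sigma^{\alpha}(b)\cdot\sigma^{\alpha}\!\bigl((a\sigma^{\alpha}(b))^{j}\bigr)=0.
\]
Refactoring this expression as $P\cdot\sigma^{\alpha}(Q)$ with $P=(ab)^{k-j-1}a\sigma^{\alpha}(b)$ and $Q=(a\sigma^{\alpha}(b))^{j}$, a second application of Fact~A in reverse yields $PQ=(ab)^{k-j-1}(a\sigma^{\alpha}(b))^{j+1}=0$. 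Since every transition is an if-and-only-if, iterating from $j=0$ to $j=k$ delivers both directions.

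Finally I would bootstrap the remaining equivalences. The classical identity $(yx)^{k+1}=y(xy)^{k}x$ gives $xy\in N(R)\iff yx\in N(R)$, so (2)$\iff$(3) and (4)$\iff$(5) are immediate. Applying (1)$\iff$(2) to the pair $(b,a)$ in place of $(a,b)$ and then using the swap yields (1)$\iff$(4); composing (1)$\iff$(4) with (1)$\iff$(2) applied to $(\sigma^{\alpha}(a),b)$ delivers (1)$\iff$(6). The main technical obstacle is the induction step: one must push $\sigma^{\alpha}$ inside a right block and then re-extract it against a \emph{different} bracketing. This matters because Fact~A does not allow un-promoting a $\sigma^{\alpha}$ from an inner factor, so the precise choice of split---always peeling off the rightmost $b$ or the leftmost $a\sigma^{\alpha}(b)$---is what keeps the calculation closed and effects the exchange of one $ab$ for one $a\sigma^{\alpha}(b)$ per iteration.
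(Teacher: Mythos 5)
Your proposal is correct and follows essentially the same route as the paper: the heart in both cases is trading $b$ for $\sigma^{\alpha}(b)$ one factor at a time inside $(ab)^{k}$ using $\Sigma$-compatibility (your Fact~A is the paper's Lemma~\ref{colosss}), combined with the standard nilpotency swap $xy\in N(R)\iff yx\in N(R)$. The only differences are organizational: your telescoping induction on the invariant $(ab)^{k-j}(a\sigma^{\alpha}(b))^{j}=0$ makes precise what the paper settles with ``continuing with this process,'' and you obtain (1)$\iff$(4) from (1)$\iff$(2) plus the swap instead of repeating a parallel computation as the paper does.
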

\begin{proof}
Let $R$ be a $\Sigma$-compatible ring, $a, b\in R$ and $\alpha,\beta\in \mathbb{N}^n$.
 
(1) $\Leftrightarrow$ (2) By definition, $ab\in N(R) \Leftrightarrow (ab)^k = 0$, for some positive integer $k$. If $k=1$, $ab =0$ if and only if
$a\sigma^{\alpha}(b)$. If $k=2$, $0= abab\Leftrightarrow 0=a\sigma^{\alpha}(bab)=a\sigma^{\alpha}(b)\sigma^{\alpha}(ab)\Leftrightarrow
a\sigma^{\alpha}(b)ab=0\Leftrightarrow
0=a\sigma^{\alpha}(b)a\sigma^{\alpha}(b)=(a\sigma^{\alpha}(b))^2$, where all equivalences are due to the $\Sigma$-compatibility of $R$. Continuing with this process, $(ab)^k=0\Leftrightarrow (a\sigma^{\alpha}(b))^k=0$ Thus, $ab\in N(R)$ if and only if $a\sigma^{\alpha}(b)\in N(R)$.

(2) $\Leftrightarrow$ (3) It is clear.

(1) $\Leftrightarrow$ (4) Again, since $ab\in N(R) \Leftrightarrow (ab)^k = 0$, for some positive integer $k$, if $k=1$, Lemma \ref{colosss} implies that $ab =0 \Leftrightarrow \sigma^{\alpha}(a)b=0$. If $k=2$, $0= abab\Leftrightarrow 0 = \sigma^{\alpha}(a)bab\Leftrightarrow
\sigma^{\alpha}(a)b\sigma^{\alpha}(ab)=\sigma^{\alpha}(a)b\sigma^{\alpha}(a)\sigma^{\alpha}(b)$,
$\Leftrightarrow \sigma^{\alpha}(a)b\sigma^{\alpha}(a)b=0$
$\Leftrightarrow (\sigma^{\alpha}(a)b)^2=0$, where the first equivalence is due to Lemma \ref{colosss}, and the others equivalences are due to the $\Sigma$-compatibility of $R$. Continuing in this way, we can see that $(ab)^k=0\Leftrightarrow (\sigma^{\alpha}(a)b)^k=0$. Hence $ab\in
N(R) \Leftrightarrow \sigma^{\alpha}(a)b\in N(R)$.

(4) $\Leftrightarrow$ (5) It is immediate.

(5) $\Leftrightarrow$ (6) It follows from (1) $\Leftrightarrow$ (2) by replacing $a$ by
$\sigma^{\alpha}(a)$ and $\sigma^{\alpha}(b)$ by
$\sigma^{\beta}(b)$.
\end{proof}
To finish this section, we recall the notion of Armendariz ring in the setting of commutative and noncommutative rings. Following Rege and Chhawchharia \cite{RegeChhawchharia1997}, a ring $R$ is called {\em Armendariz} if for elements $f(x) = \sum_{i=0}^{m} a_ix^{i},\ g(x) = \sum_{j=0}^{l} b_jx^{j}\in R[x]$ (where $R[x]$ is the commutative polynomial ring with an indeterminate $x$ over $R$), $f(x)g(x) = 0$ implies $a_ib_j = 0$, for all $i, j$. The importance of this notion lies in its natural and useful role in understanding the relation between the annihilators of the ring $R$ and the annihilators of the polynomial ring $R[x]$. For example, Armendariz \cite[Lemma 1]{Armendariz1974}, showed that a reduced ring always satisfies this condition. For skew polynomial rings, the notion of Armendariz has been also studied by several authors (see \cite{AndersonCamillo1998,Armendariz1974,HongKimKwak2003,Huhetal2002,KimLee2000,LeeWong2003,RegeChhawchharia1997}). Hirano \cite{Hirano2002} generalized this notion in the following way: a ring $R$ is called {\em quasi-Armendariz} if for $f(x) = \sum_{i=0}^{m} a_ix^{i},\ g(x) = \sum_{j=0}^{l} b_jx^{j}$ in $R[x]$, $f(x)R[x]g(x) = 0$ implies $a_iRb_j = 0$, for all $i, j$. It is well known that reduced rings are Armendariz, and Armendariz rings are quasi-Armendariz, but the converse are not true in general \cite[Proposition 2.1]{Rege1997}.

\vspace{0.2cm}

In the noncommutative setting, Hashemi et al. \cite{HashemiMoussavi2005} investigated a generalization of $\sigma$-rigid rings by introducing the (SQA1) condition which is a version of quasi-Armendariz rings for skew polynomial rings. Let $R[x;\sigma,\delta]$ be an Ore extension over $R$. $R$ is said to satisfy the ({\em SQA1}) {\em condition} if whenever $f(x)R[x; \sigma, \delta]g(x) = 0$ for $f(x) = \sum_{i=0}^m a_ix^i$ and $g(x) = \sum_{j=0}^m b_jx^j \in R[x; \sigma, \delta]$, then $a_iRb_j = 0$ for all $i, j$. Later, Reyes and Su\'arez \cite{ReyesSuarez2018RUMA} considered this condition for skew PBW extensions. For $A = \sigma(R)\langle x_1,\dotsc, x_n\rangle$ a skew PBW extension over $R$, $R$ is said to satisfy the ({\em SQA1}) {\em condition} if whenever $fAg=0$ for $f=a_0+a_1X_1+\dotsb + a_mX_m$ and $g=b_0 + b_1Y_1 + \dotsb + b_tY_t$ elements of $A$, then $a_iRb_j = 0$, for every $i, j$. Relationships between the notions of compatibility, (SQA1), and Armendariz rings, in the context of skew PBW extensions, can be found in \cite{ReyesSuarez2018-3,ReyesSuarez2019Radicals}. All of them will be useful in the next sections.
\section{\texorpdfstring{$\Sigma -$}\ skew reflexive rings}\label{sect-Ore ext}
We present the original results of the paper on $\Sigma$-skew reflexive rings. We start with the following definition which establishes the generalizations of skew reflexive endomorphism and $\sigma$-skew reflexive ring mentioned in the Introduction.
\begin{definition}\label{def-skewReflex}
Let $\Sigma=\{\sigma_1,\dots, \sigma_n\}$ be a finite family of endomorphisms of a ring $R$. $\Sigma$ is called \emph{right} (resp., \emph{left}) {\em skew reflexive} if for $a, b\in R$, $aRb = 0$ implies $bR\sigma^{\alpha}(a) = 0$ (resp., $\sigma^{\alpha}(b)Ra = 0$), where $\alpha\in \mathbb{N}^n\ \backslash\ \{0\}$. $R$ is called right (resp., {\em left}) $\Sigma$-{\em skew reflexive} if there exists a right (resp., left) skew reflexive family of endomorphisms $\Sigma$ of $R$. If $R$ is both right and left $\Sigma$-skew reflexive, then $R$ is said to be $\Sigma$-{\em skew reflexive}.
\end{definition}
Reflexive rings are $\Sigma$-skew reflexive. Since reduced rings and reversible rings are reflexive, then both rings are $\Sigma$-skew reflexive. 
Right (resp., left) $\sigma$-skew reflexive rings are right (resp., left) $\Sigma$-skew reflexive.
\begin{example}\label{exampleS2} 
Given a ring $R$ and $M$ an $(R,R)$-bimodule, \textit{the trival extension of R by M} is the ring $T(R,M):= R\oplus M$ with the usual addition and the multiplication defined as $(r_1,m_1)(r_2,m_2):=(r_1r_2, r_1m_2 + m_1r_2)$, for all $r_1,r_2 \in R$ and $m_1,m_2 \in M$. Note that $T(R,M)$ is isomorphic to the matrix ring (with the usual matrix operations) of the form $\bigl(\begin{smallmatrix}r & m\\ 0 & r \end{smallmatrix}\bigr)$, where $r\in R$ and $m \in M$. In particular, we call $S_2(\mathbb{Z})$ the ring of matrices isomorphic to $T(\mathbb{Z},\mathbb{Z})$.

Consider the ring $S_2(\mathbb{Z})$ given by
\begin{center}
    $S_2(\mathbb{Z})= \left \{ \begin{pmatrix}a & b\\ 0 & a \end{pmatrix} \mid a,b \in \mathbb{Z} \right \}$.
\end{center}
Let $\sigma_1={\rm id}_{S_2(\mathbb{Z})}$ be the identity endomorphism of $S_2(\mathbb{Z})$, and let $\sigma_2$ and $\sigma_3$ be the two endomorphisms of $S_2(\mathbb{Z})$ defined by
\begin{center}
    $\sigma_2\left ( \begin{pmatrix}a & b\\ 0 & a \end{pmatrix} \right )= \begin{pmatrix}a & -b\\0 & a\end{pmatrix}, \ \ \ \ \ \ \sigma_3\left ( \begin{pmatrix}a & b\\ 0 & a \end{pmatrix} \right )= \begin{pmatrix}a & 0\\0 & a\end{pmatrix}$.
\end{center}
Consider $ARB = 0$, for all $R \in S_2(\mathbb{Z})$, where 
\begin{center}
    $A=\begin{pmatrix} a & a'\\ 0& a\end{pmatrix},\ \ B=\begin{pmatrix} b & b'\\ 0 & b \end{pmatrix}\ \ \text{and}\ \ R=\begin{pmatrix} r & r'\\ 0 & r \end{pmatrix}$.
\end{center}
We have $arb = 0$ and $arb' + ar'b + bra' = 0$, for every $r \in \mathbb{Z}$, whence $a = 0$ or $b = 0$. If $a = 0$, then $bra' = 0$, for every $r \in \mathbb{Z}$, hence $b = 0$ or $a' = 0$. If $b = 0$, then $arb' = 0$, for each $r \in \mathbb{Z}$, hence $a = 0$ or $b' = 0$. First, we consider the case $BR\sigma_1(A)=BRA$:
\begin{center}
    $BR\sigma_1(A)=\begin{pmatrix} bra & bra' + br'a + b'ra\\ 0& bra\end{pmatrix}$.
\end{center}
From the previous observation, it follows that $BR\sigma_1(A)=0$. Now, consider the case $BR\sigma_2(A)$. A calculation shows us that
\begin{center}
    $BR\sigma_2(A)= \begin{pmatrix} bra & -bra' + br'a + b'ra\\ 0& bra\end{pmatrix}$.
\end{center}
Again, making use of the initial observation, we can see that $BR\sigma_2(A)=0$. $S_2(\mathbb{Z})$ is right $\sigma_2$-skew reflexive \cite[Proposition 2.11]{Kwaketal2014}. Finally, we present the case $BR\sigma_3(A)$:
\begin{center}
    $BR\sigma_3(A)= \begin{pmatrix} bra &  br'a + b'ra\\ 0& bra\end{pmatrix}$.
\end{center}
We obtain that $bra=0$, since $BRA=0$. If $a=0$, then $br'a + b'ra=0$, for every $r,r' \in \mathbb{Z}$. If $b=0$, it follows that $b'ra + br'a + bra' = br'a + b'ra = 0$, for each $r,r'\in \mathbb{Z}$. This implies that $BR\sigma_3(A)=0$. We can also observe that $\sigma_2 \circ \sigma_3=\sigma_3 \circ \sigma_2=\sigma_3$, which implies that $S_2(\mathbb{Z})$ is right $\Sigma$-skew reflexive with $\Sigma=\left \{ \sigma_1,\sigma_2,\sigma_3 \right \}$.

Since $\sigma_1$ is the identity endomorphism, then we have that
$S_2(\mathbb{Z})$ is a reflexive ring. Now, $S_2(\mathbb{Z})$ is not
$\sigma_3$-compatible since for
\begin{center}
    $A=\begin{pmatrix} 1 & 1\\ 0& 1\end{pmatrix},\ \ B=\begin{pmatrix} 0 & 1\\ 0 & 0 \end{pmatrix},$
\end{center}
$A\sigma_3(B)=0$ but $AB=\begin{pmatrix} 0 & 1\\ 0&
0\end{pmatrix}\neq 0.$ According to Remark \ref{remark-comsiiSigma}, $S_2(\mathbb{Z})$ is not $\Sigma$-compatible.
\end{example}
The following result shows that under conditions of compatibility, the composition of left (resp. right) skew reflexive endomorphisms is left (resp. right) skew reflexive endomorphism.
\begin{proposition}\label{prop-composkewReflex}
Composition of compatible left (resp. right)  skew reflexive endomorphisms is left (resp. right) skew reflexive.
\end{proposition}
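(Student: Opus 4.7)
The plan is to prove both parts by a short two-step chain: apply the skew reflexive property of the inner endomorphism, then transport the outer endomorphism onto the appropriate factor using its compatibility with $R$. Fix endomorphisms $\sigma_1,\sigma_2$ of $R$ that are each compatible with $R$ and each right (resp.\ left) skew reflexive, and aim to show that $\sigma_1\circ\sigma_2$ has the same property.

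The engine of the proof is a pair of standard consequences of $\sigma$-compatibility, which I would either cite from Lemma \ref{colosss} (specialized to a single endomorphism, with no derivation) or record as a one-line lemma: for any $x,y\in R$,
\[
xRy=0\iff xR\sigma(y)=0\iff \sigma(x)Ry=0.
\]
The first equivalence is immediate from applying $\sigma$-compatibility to each product $(xr)\cdot y=0$. The second follows by applying $\sigma$ to every triple $xry=0$ (using that $\sigma$-compatibility forces $\sigma$ to be injective) and then using compatibility in reverse.

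With this in hand, the right case is a two-line argument. Let $a,b\in R$ with $aRb=0$. The right skew reflexivity of $\sigma_2$ yields $bR\sigma_2(a)=0$, and then the $\sigma_1$-compatibility lemma applied to $x=b$, $y=\sigma_2(a)$ promotes this to $bR\sigma_1(\sigma_2(a))=0$, i.e.\ $bR(\sigma_1\circ\sigma_2)(a)=0$, which is precisely the right skew reflexive condition for $\sigma_1\circ\sigma_2$. The left case is entirely analogous: from $aRb=0$ one obtains $\sigma_2(b)Ra=0$ by left skew reflexivity of $\sigma_2$, and the left form of the compatibility lemma (with $x=\sigma_2(b)$, $y=a$) upgrades this to $\sigma_1(\sigma_2(b))Ra=0$, yielding $(\sigma_1\circ\sigma_2)(b)Ra=0$.

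I do not foresee any substantive obstacle. The only point that merits care is verifying the left form $xRy=0\iff \sigma(x)Ry=0$ of the compatibility consequence, since the definition of $\sigma$-compatibility is stated only on the right side of the product; this is handled in the preliminary observation above. Everything else is a direct chain of implications from the definitions.
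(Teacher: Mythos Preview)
Your argument is correct and follows essentially the same route as the paper's proof: apply the skew reflexive property of $\sigma_2$ first, then use $\sigma_1$-compatibility to push $\sigma_1$ onto the desired factor. The paper does this inline (applying $\sigma_1$ to the equation $\sigma_2(b)ra=0$ or $br\sigma_2(a)=0$ and then invoking compatibility), whereas you isolate the compatibility consequence $xRy=0\iff \sigma(x)Ry=0\iff xR\sigma(y)=0$ as a preliminary lemma; this is only a packaging difference. One small caution: your sketched justification for the left equivalence (``apply $\sigma$ to every triple $xry=0$ and use compatibility in reverse'') only yields $\sigma(x)\sigma(r)y=0$, which is not enough when $\sigma$ is not surjective; the clean route is exactly the one you cite from Lemma~\ref{colosss}, namely $ab=0\Rightarrow\sigma(a)b=0$ applied with $a=x$ and $b=ry$.
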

\begin{proof}
Let $\sigma_1$ and $\sigma_2$ be compatible endomorphisms of a ring $R$ and consider $a,b\in R$. Suppose that $aRb=0$, i.e. $arb=0$, for every $r\in R$. If $\sigma_1$ and $\sigma_2$ are left skew reflexive, then $\sigma_1(b)ra = \sigma_2(b)ra = 0$. Hence $\sigma_1(\sigma_2(b)ra)=
\sigma_1(\sigma_2(b))\sigma_1(ra)=\sigma_1(\sigma_2(b))ra=0$, since $\sigma_1$ is compatible. So, $\sigma_1\circ \sigma_2$ is left skew reflexive. Now, if $\sigma_1$ and $\sigma_2$ are right skew
reflexive endomorphisms, then $arb=0$ implies $br\sigma_1(a) = br\sigma_2(a) = 0$, whence $\sigma_1(br\sigma_2(a))=\sigma_1(br)\sigma_1(\sigma_2(a))=0$. Since $\sigma_1$ is compatible,  $\sigma_1(br)\sigma_1(\sigma_2(a))=0$ implies $br\sigma_1(\sigma_2(a))=0$, and therefore $\sigma_1\circ \sigma_2$ is a right skew reflexive endomorphism.
\end{proof}
From Proposition \ref{prop-composkewReflex} we deduce the following results.
\begin{corollary}\label{cor-composkewReflexive}
Composition of a finite family of compatible left (resp. right) skew reflexive  endomorphisms is a left (resp. right) skew reflexive endomorphism. If $R$ is $\sigma$-compatible  then $R$ is right (resp., left) $\sigma$-skew reflexive if and only if it is right (resp., left) $\sigma^{k}$-skew reflexive, for all $k\geq 1$. Thus, if $R$ is $\sigma$-compatible then $R$ is $\sigma$-skew reflexive if and only if it is $\sigma^{k}$-skew reflexive, for all $k\geq 1$.
\end{corollary}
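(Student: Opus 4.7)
The plan is to derive all three statements from Proposition \ref{prop-composkewReflex} by an inductive argument. The key preliminary observation I will need is that the class of compatible endomorphisms is closed under composition: if $\sigma$ and $\tau$ are both compatible on $R$, then for $a,b \in R$, $a(\sigma \circ \tau)(b) = a\sigma(\tau(b)) = 0$ is equivalent to $a\tau(b) = 0$ by compatibility of $\sigma$, which is in turn equivalent to $ab = 0$ by compatibility of $\tau$. This short calculation ensures that iterated applications of Proposition \ref{prop-composkewReflex} remain legitimate, since each intermediate composite is again a compatible endomorphism.

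For the first assertion I will proceed by induction on the cardinality $n$ of the family. The case $n = 2$ is precisely Proposition \ref{prop-composkewReflex}. For the inductive step, given $n+1$ compatible left (resp.\ right) skew reflexive endomorphisms $\sigma_1, \dotsc, \sigma_{n+1}$, the inductive hypothesis applied to $\sigma_2, \dotsc, \sigma_{n+1}$ gives that $\tau := \sigma_2 \circ \cdots \circ \sigma_{n+1}$ is left (resp.\ right) skew reflexive, and the preliminary observation (iterated $n-1$ times) gives that $\tau$ is compatible. A final application of Proposition \ref{prop-composkewReflex} to the pair $(\sigma_1, \tau)$, both of which are compatible and of the same one-sided skew reflexive type, yields that $\sigma_1 \circ \tau = \sigma_1 \circ \sigma_2 \circ \cdots \circ \sigma_{n+1}$ is left (resp.\ right) skew reflexive, closing the induction.

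For the second assertion, the direction $(\Leftarrow)$ is immediate by taking $k=1$. For $(\Rightarrow)$, suppose $R$ is $\sigma$-compatible and right (resp., left) $\sigma$-skew reflexive. Then the family $\{\sigma, \sigma, \dotsc, \sigma\}$ consisting of $k$ copies of $\sigma$ satisfies the hypotheses of the first assertion, so its composition $\sigma^{k}$ is right (resp., left) skew reflexive; equivalently, $R$ is right (resp., left) $\sigma^{k}$-skew reflexive. The third assertion then follows by applying the second simultaneously on the right and on the left.

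There is no real obstacle here; the only point that requires any verification is that compatibility is preserved under composition, and this is a direct unfolding of the definition. Everything else is bookkeeping on top of Proposition \ref{prop-composkewReflex}.
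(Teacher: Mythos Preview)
Your proof is correct and follows exactly the approach the paper intends: the paper gives no explicit proof of this corollary, merely stating that it is deduced from Proposition \ref{prop-composkewReflex}, and your inductive argument (together with the observation that compatibility is preserved under composition) is precisely the natural way to carry out that deduction.
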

\begin{proposition}\label{prop-Sigmarefiffsigma}
Let $\Sigma=\{\sigma_1,\dots, \sigma_n\}$ be a compatible family of endomorphisms of $R$. Then  $\Sigma$ is a right (left) skew
reflexive family if and only if each $\sigma_k\in \Sigma$ is a right
(left) skew reflexive endomorphism.
\end{proposition}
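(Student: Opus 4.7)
The plan is to argue the equivalence directly from the definition and the earlier composition results, without introducing any new machinery. I would split into the two implications and handle the ``right'' version; the ``left'' version is completely analogous by symmetry of the definitions.

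For the forward direction $(\Rightarrow)$, the observation is that the exponent $\alpha$ is allowed to be any nonzero element of $\mathbb{N}^n$, in particular the standard basis vector $e_k$, for which $\sigma^{e_k}=\sigma_k$. So if $\Sigma$ is right skew reflexive, then applying Definition \ref{def-skewReflex} with $\alpha=e_k$ yields: $aRb=0$ implies $bR\sigma_k(a)=0$, for every $1\le k\le n$. Thus each $\sigma_k$ is a right skew reflexive endomorphism. This step is essentially a tautology.

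For the backward direction $(\Leftarrow)$, suppose each $\sigma_k$ is a right skew reflexive endomorphism. By Remark \ref{remark-comsiiSigma}, the hypothesis that $\Sigma$ is compatible is equivalent to saying each $\sigma_k$ is compatible, so each $\sigma_k$ is a \emph{compatible} right skew reflexive endomorphism. Fix $\alpha=(\alpha_1,\dots,\alpha_n)\in\mathbb{N}^n\setminus\{0\}$ and any $a,b\in R$ with $aRb=0$. Since at least one $\alpha_k\ge 1$, Corollary \ref{cor-composkewReflexive} tells us that $\sigma_k^{\alpha_k}$ is right skew reflexive (for each $k$ with $\alpha_k\ge 1$, while the factors with $\alpha_k=0$ are trivial and can be discarded). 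Applying Proposition \ref{prop-composkewReflex} iteratively to the resulting list of compatible right skew reflexive endomorphisms, the composition $\sigma^{\alpha}=\sigma_1^{\alpha_1}\circ\cdots\circ\sigma_n^{\alpha_n}$ is itself right skew reflexive. Therefore $aRb=0$ forces $bR\sigma^{\alpha}(a)=0$, which is exactly what is needed for $\Sigma$ to be right skew reflexive.

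I do not anticipate any real obstacle: both directions reduce to already-established composition results. The only point requiring mild care is that the composition step mixes different $\sigma_k$'s (not just powers of a single $\sigma$), which is why we need the general form of Proposition \ref{prop-composkewReflex} rather than only the ``$\sigma^k$'' case of Corollary \ref{cor-composkewReflexive}; once this is noted, the argument is complete.
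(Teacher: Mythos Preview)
Your proposal is correct and follows essentially the same approach as the paper: the forward direction via the basis vector $\alpha=e_k$ is identical, and the backward direction via the composition results (Proposition~\ref{prop-composkewReflex} and Corollary~\ref{cor-composkewReflexive}) is also the paper's route. The only cosmetic difference is that the paper invokes Corollary~\ref{cor-composkewReflexive} in one stroke (its first sentence already covers arbitrary finite compositions of compatible skew reflexive endomorphisms), whereas you decompose the step into ``first take powers, then compose across indices''; both are valid.
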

\begin{proof}
Let $\sigma_k\in \Sigma$. Note that for
$\alpha=(\alpha_1,\dots,\alpha_n)\in \mathbb{N}^n$, $\sigma^{\alpha}= \sigma_k$, for $\alpha_i=0$ if $i\neq k$ and $\alpha_k=1$. If $\Sigma$ is a right (resp., left) skew reflexive family of endomorphisms,  then for $a, b\in R$, $aRb = 0$ implies $bR\sigma^{\alpha}(a) = 0= bR\sigma_k(a)$ (resp., $\sigma^{\alpha}(b)Ra=\sigma_k(b)Ra=0$). So, $\sigma_k$ is a right (resp., left) skew reflexive endomorphism. On the other hand, if each $\sigma_k\in\Sigma$ is a compatible right (left) skew reflexive endomorphism  then by Corollary \ref{cor-composkewReflexive} we have that $\sigma^{\alpha}$ is right (resp., left) skew reflexive, for every $\alpha\in \mathbb{N}^n$. Then for $a, b\in R$, $aRb = 0$ implies $bR\sigma^{\alpha}(a)$ (resp., $\sigma^{\alpha}(b)Ra=0$), for  every $\alpha\in \mathbb{N}^n$. Therefore $\Sigma$ is a right (left) skew reflexive family.
\end{proof}
\begin{proposition}\label{prop-SigmaRNPimlArefl}
Let $A = \sigma(R)\langle x_1,\dotsc, x_n\rangle$ be a quasi-commutative skew PBW extension over a $\Sigma$-skew reflexive ring $R$, that is, $A$ is an iterated skew polynomial ring of injective endomorphism type over $R$. If $R$  satisfies the (SQA1) condition, then $A$ is reflexive.
\end{proposition}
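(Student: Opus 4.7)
The plan is to prove $A$ is reflexive by fixing $f,g\in A$ with $fAg=0$ and showing $gAf=0$. Write $f=\sum_{i=0}^m a_iX_i$ and $g=\sum_{j=0}^t b_jY_j$ with $X_i=x^{\alpha_i}$ and $Y_j=x^{\beta_j}$ in $\mathrm{Mon}(A)$. The two hypotheses will be used in sequence: the (SQA1) condition immediately passes from the polynomial relation $fAg=0$ to the coefficient relations $a_iRb_j=0$ for every $i,j$, and then the (right) $\Sigma$-skew reflexive property of $R$ upgrades each of these to $b_jR\sigma^{\alpha}(a_i)=0$ for every $\alpha\in\mathbb{N}^n\setminus\{0\}$.

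To establish $gAf=0$ I would expand $ghf$ for an arbitrary $h=\sum_k c_kZ_k\in A$ with $Z_k=x^{\gamma_k}$. Because $A$ is quasi-commutative, hence of endomorphism type, the simplified multiplication rules of Proposition \ref{coefficientes} give $x^{\mu}r=\sigma^{\mu}(r)x^{\mu}$ and $x^{\mu}x^{\nu}=d_{\mu,\nu}x^{\mu+\nu}$, with each $d_{\mu,\nu}$ left-invertible in $R$. Iterating them yields
\[
ghf=\sum_{i,j,k}b_j\,\sigma^{\beta_j}(c_k)\,d_{\beta_j,\gamma_k}\,\sigma^{\beta_j+\gamma_k}(a_i)\,d_{\beta_j+\gamma_k,\alpha_i}\,x^{\beta_j+\gamma_k+\alpha_i},
\]
in which every summand has $b_j$ and $\sigma^{\beta_j+\gamma_k}(a_i)$ separated only by an element of $R$. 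Whenever $\beta_j+\gamma_k\neq 0$, the relation $b_jR\sigma^{\beta_j+\gamma_k}(a_i)=0$ forces that summand to vanish, since $\sigma^{\beta_j}(c_k)d_{\beta_j,\gamma_k}\in R$ can be absorbed between $b_j$ and $\sigma^{\beta_j+\gamma_k}(a_i)$.

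The hard part will be disposing of the summands with $\beta_j=\gamma_k=0$, i.e.\ the contributions $b_0c_0a_iX_i$ coming from the constant parts of $g$ and of the test element $h$; here the $\Sigma$-skew reflexive hypothesis as stated supplies no direct information because it excludes the exponent $\alpha=0$. To kill these terms one needs $b_0Ra_i=0$ for each $i$, and I expect this to be obtained either via a bookkeeping argument collecting the constant-term contributions across different monomial exponents and exploiting left-invertibility of the $d_{\mu,\nu}$'s, or via an auxiliary step showing that under (SQA1) combined with $\Sigma$-skew reflexivity the ring $R$ is itself reflexive, so that $a_iRb_j=0$ forces $b_jRa_i=0$ directly. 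Once these constant-term contributions are controlled, the expansion vanishes term by term; since $h$ was arbitrary, $gAf=0$, which is the desired reflexivity of $A$.
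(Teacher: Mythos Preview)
Your strategy is exactly the paper's: apply (SQA1) to $fAg=0$ to get $a_iRb_j=0$ for all $i,j$, invoke $\Sigma$-skew reflexivity to flip these into relations with a $\sigma^{\alpha}$ inserted, and then expand an arbitrary product $ghf$ using the quasi-commutative rules $x^{\mu}r=\sigma^{\mu}(r)x^{\mu}$ and $x^{\mu}x^{\nu}=d_{\mu,\nu}x^{\mu+\nu}$ from Proposition~\ref{coefficientes} to see that every coefficient is annihilated. One small difference: the paper uses the full two-sided hypothesis, recording both $b_jR\sigma^{\alpha}(a_i)=0$ and $\sigma^{\alpha}(b_j)Ra_i=0$ for $\alpha\in\mathbb{N}^n\setminus\{0\}$, whereas you cite only the right-hand consequence.

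Regarding the gap you isolate (the summands with $\beta_j+\gamma_k=0$, which would require $b_0Ra_i=0$ and in particular $b_0Ra_0=0$ despite $\alpha=0$ being excluded from Definition~\ref{def-skewReflex}): the paper does not treat this case any more explicitly than you do. Its proof simply asserts that, ``taking into account that $b_jR\sigma^{\alpha}(a_i)=\sigma^{\alpha}(b_j)Ra_i=0$, for $\alpha\in\mathbb{N}^n\setminus\{0\}$'', the expansion of $gAf$ vanishes. Neither of your two speculated resolutions is carried out there, and the extra left-skew-reflexive relation the paper records does not obviously supply $b_0Ra_0=0$ either, since it too excludes $\alpha=0$. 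So your proposal is essentially the paper's argument with this same step honestly flagged rather than glossed over.
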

\begin{proof}
Let $\Sigma=\{\sigma_1,\dots,\sigma_n\}$ be the family of injective endomorphisms of $R$ as in Proposition \ref{sigmadefinition}. Consider $f=a_0+a_1X_1+\dotsb + a_mX_m$ and $g=b_0 + b_1Y_1 + \dotsb + b_tY_t$ elements of $A$. If $fAg=0$, by the (SQA1) condition of $R$ we obtain $a_iRb_j = 0$, for each $i, j$. Since $R$ is $\Sigma$-skew reflexive, $b_jR\sigma^{\alpha}(a_i) = \sigma^{\alpha}(b_j)Ra_i = 0$, where $\alpha\in \mathbb{N}^n\ \backslash\ \{0\}$, and each $1\leq i\leq m$, $1\leq j\leq t$. As $A$ is quasi-commutative, by Proposition \ref{coefficientes} we know that for every $x^{\alpha}\in {\rm Mon}(A)$ and every $0\neq
r\in R$, there exists an element $\sigma^{\alpha}(r)\in R\ \backslash\ \{0\}$ such that $x^{\alpha}r=\sigma^{\alpha}(r)x^{\alpha}$, and for every $x^{\alpha}, x^{\beta}\in {\rm Mon}
(A)$, there exists an element $d_{\alpha,\beta}\in R$ such that $x^{\alpha}x^{\beta} = d_{\alpha,\beta}x^{\alpha+\beta}$, where $d_{\alpha,\beta}$ is left invertible. Thus, applying these commutation rules to the product  $(b_0 + b_1Y_1 + \dotsb +
b_tY_t)(c_0+c_1Z_1+\dotsb + c_lZ_l)(a_0+a_1X_1+\dotsb + a_mX_m)$,
where $c_0+c_1Z_1+\dotsb + c_lZ_l$ is an  arbitrary element of $A$, and taking into account that $b_jR\sigma^{\alpha}(a_i) = \sigma^{\alpha}(b_j)Ra_i = 0$, for $\alpha\in \mathbb{N}^n\
\backslash\ \{0\}$, and every $i$, $j$, then $gAf=0$, that is, $A$ is reflexive.
\end{proof}
From Proposition \ref{prop-SigmaRNPimlArefl} we obtain that if $R$ is a reflexive ring, then $R[x]$ is right idempotent reflexive \cite[Theorem 3.13 (1)]{KwakLee2012}.
\begin{proposition}\label{prop-semiprimplSigmaskew}
$\Sigma$-compatible semiprime rings are $\Sigma$-skew reflexive.
\end{proposition}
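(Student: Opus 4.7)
The plan is to combine the fact (noted in Section~\ref{sect.prelim}) that semiprime rings are reflexive with a symmetric reformulation of the $\Sigma$-compatibility condition which lets one move $\sigma^{\alpha}$ freely across a zero product from either side. Given $a,b\in R$ with $aRb=0$, semiprimeness will yield $bRa=0$ by the standard computation $(bRa)R(bRa)\subseteq bR(aRb)Ra=0$, so that each element $x\in bRa$ satisfies $xRx=0$ and hence $x=0$ by semiprimeness. Then the compatibility identity will be applied to the equation $bra=0$ in two different ways to obtain both $bR\sigma^{\alpha}(a)=0$ and $\sigma^{\alpha}(b)Ra=0$, which are exactly the right and left $\Sigma$-skew reflexive conditions.

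The key auxiliary step is to upgrade the defining equivalence $xy=0\iff x\sigma^{\alpha}(y)=0$ to the two-sided form
\[
xy=0\ \iff\ x\sigma^{\alpha}(y)=0\ \iff\ \sigma^{\alpha}(x)y=0,
\]
valid for all $x,y\in R$ and every $\alpha\in\mathbb{N}^{n}$. The first equivalence is the definition of $\Sigma$-compatibility. For the second, apply $\sigma^{\alpha}$ to $xy=0$ to get $\sigma^{\alpha}(x)\sigma^{\alpha}(y)=0$, then use the first equivalence with $\sigma^{\alpha}(x)$ in place of $x$ to strip the $\sigma^{\alpha}$ off $y$; the reverse implication uses the injectivity of $\sigma^{\alpha}$, which is automatic from $\Sigma$-compatibility by taking $x=1$ in the defining relation.

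Putting the two ingredients together, $aRb=0$ gives $bRa=0$ by the semiprime--implies--reflexive argument. Reading $bra=0$ as $(br)\cdot a=0$ and shifting the right factor through the compatibility identity yields $br\sigma^{\alpha}(a)=0$ for every $r\in R$, hence $bR\sigma^{\alpha}(a)=0$; reading the same equation as $b\cdot(ra)=0$ and shifting the left factor yields $\sigma^{\alpha}(b)(ra)=0$ for every $r\in R$, hence $\sigma^{\alpha}(b)Ra=0$. Thus $R$ is simultaneously right and left $\Sigma$-skew reflexive. The only subtle point is the left-shift direction of the compatibility identity, which is not literally the definition but follows cleanly by applying $\sigma^{\alpha}$ and using its injectivity; once it is in hand, the rest is bookkeeping and I do not anticipate any real obstacle.
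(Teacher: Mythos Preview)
Your proof is correct and uses the same two ingredients as the paper's: the two-sided form of $\Sigma$-compatibility (which the paper records as Lemma~\ref{colosss}) and the semiprime ``sandwich'' argument $(I)R(I)=0\Rightarrow I=0$. The only difference is the order: you first use semiprimeness to get $bRa=0$ and then shift by $\sigma^{\alpha}$, whereas the paper shifts first (obtaining $aR\sigma^{\alpha}(b)=0$ and $\sigma^{\alpha}(a)Rb=0$) and then applies the semiprime argument directly to $\sigma^{\alpha}(b)Ra$ and $bR\sigma^{\alpha}(a)$; both routes are equally short and neither gains anything over the other.
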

\begin{proof}
Suppose that $R$ is semiprime and $\Sigma$-compatible for $\Sigma=\{\sigma_1,\dotsc, \sigma_n\}$ a finite family of endomorphisms
of $R$. Let $a,b\in R$ such that $aRb=0$. By the $\Sigma$-compatibility of $R$, $aR\sigma^{\alpha}(b)=0$, where $\alpha\in \mathbb{N}^n$, whence $\sigma^{\alpha}(b)RaR\sigma^{\alpha}(b)Ra=0$. Since $R$ is semiprime, $\sigma^{\alpha}(b)Ra=0$, and thus $R$ is left $\Sigma$-skew reflexive. By Remark \ref{remark-comsiiSigma}, $aRb=0$ implies $\sigma^{\alpha}(a)Rb=0$, and so $bR\sigma^{\alpha}(a)RbR\sigma^{\alpha}(a)=0$. The semiprimeness of $R$ implies  $bR\sigma^{\alpha}(a)=0$, and therefore $R$ is right $\Sigma$-skew reflexive.
\end{proof}
Propositions \ref{prop-SigmaRNPimlArefl} and
\ref{prop-semiprimplSigmaskew} guarantee that quasi-commutative skew PBW extensions over $\Sigma$-compatible semiprime rings satisfying the (SQA1) condition are reflexive.
\begin{remark}
Following Birkenmeier et al. \cite{Birkenmeieretal2001}, a ring $R$ is called {\em right} ({\em left}) {\em principally quasi-Baer} (or simply, {\em right} ({\em left}) {\em p.q.-Baer}) ring if the right annihilator of a principal right ideal of $R$ is generated by an idempotent. $R$ is said to be a {\em p.q.-Baer} if it is both right and left p.q.-Baer. Having in mind that Baer properties have been studied in the setting of skew PBW extensions \cite{ReyesSuarez2018-3,ReyesSuarez2018RUMA}, if $A$ is a skew PBW extension right p.q.-Baer ring, then $A$ is a semiprime ring if and only if $A$ is a reflexive ring \cite[Proposition 3.15]{KwakLee2012}.
\end{remark}
\section{Skew PBW extension over \texorpdfstring{$\Sigma -$}\ skew RNP rings}\label{sect-NocSigma}
Motivated by the notion put forward by Bhattacharjee \cite[Definition 1.1]{Bhattacharjee2020},  we introduce the following definition.
\begin{definition}
Let $\Sigma=\{\sigma_1,\dots, \sigma_n\}$ be a finite family of endomorphisms of a ring $R$. $\Sigma$ is called a \emph{right} (resp., \emph{left}) {\em skew RNP family} if for $a, b\in N(R)$, $aRb = 0$ implies $bR\sigma^{\alpha}(a) = 0$ (resp., $\sigma^{\alpha}(b)Ra = 0$), for  every nonzero element $\alpha\in \mathbb{N}^n$. $R$ is said to be right (resp., left) $\Sigma$-{\em skew RNP} if there exists a right (resp., left) skew RNP family of endomorphism $\Sigma$ of $R$. If $R$ is both right and left $\Sigma$-skew RNP, then we say that $R$ is $\Sigma$-{\em skew RNP}.
\end{definition}
It is straightforward to show that right (resp., left) $\Sigma$-skew reflexive rings are right (resp., left) $\Sigma$-skew RNP, reduced rings are $\Sigma$-skew RNP, for any finite family of endomorphisms $\Sigma$, and RNP rings are $\Sigma$-skew RNP considering $\Sigma$ as the singleton consisting of the identity function of $R$.
\begin{remark}\label{rem-SigmaN(R)contN(R)} Note that for $\Sigma=\{\sigma_1, \dots, \sigma_n\}$, $\sigma^{\alpha}(N(R))\subseteq N(R)$, for each $\alpha\in \mathbb{N}^n$. Indeed,  if $r\in N(R)$ then $r^m=0$, for some $m\geq 1$. So, $\sigma^{\alpha}(r)\in \sigma^{\alpha}(N(R))$ implies that $\sigma^{\alpha}(r^n) = (\sigma^{\alpha}(r))^n = 0$, whence $\sigma^{\alpha}(r)\in N(R)$.
\end{remark}
\begin{example}
Considering Example \ref{exampleS2}. It is clear that $S_2(\mathbb{Z})$ is a right $\Sigma$-skew RNP ring with $\Sigma= \left \{\sigma_1,\sigma_2,\sigma_3 \right \}$. Furthermore, we can see that $N(S_2(\mathbb{Z})) = \left \{\begin{pmatrix}0 & b\\ 0 & 0 \end{pmatrix} \mid b \in \mathbb{Z} \right \}$.
\end{example}
\begin{example}\label{ejemploraro}
Let $\mathbb{F}_4=\left \{0,1,a,a^2 \right \}$ be the finite field of four elements. Consider the ring of polynomials $\mathbb{F}_4[z]$ and let $R=\frac{\mathbb{F}_4[z]}{\left \langle z^2 \right \rangle}$. For simplicity, we identify the elements of $\mathbb{F}_4[z]$ with their images in $R$. Let $\Sigma=\left \{\sigma_{i,j} \right \}$ be the family of homomorphisms of $R$ defined by $\sigma_{i,j}(a)=a^i$ and $\sigma_{i,j}(z)=a^jz$, for all $a \in \mathbb{Z}_3$ with $1 \leq i \leq 2$ and $0\leq j \leq 2$. Let us consider the skew PBW extension defined as $A=\sigma(R)\left \langle x_{1,0},x_{1,1},x_{1,2},x_{2,0},x_{2,1},x_{2,2} \right \rangle$, under the following commutation relations: $x_{i,j}x_{i',j'}=x_{i',j'}x_{i,j}$, for all $1 \leq i,i' \leq 2$ and $0\leq j,j' \leq 2$. On the other hand, for $a^rz \in R$, $x_{i,j}a^rz=\sigma_{i,j}(a^rz)x_{i,j}=(a^r)^ia^jzx_{i,j}=a^{ri+j}zx_{i,j}$, where $a^{ri+j} \in \mathbb{F}_4$, $1\leq i \leq 2$ and $1\leq r,j \leq 2$. Of course, this example can be extended to any finite field $\mathbb{F}_{p^n}$ with $p$ a prime number. 

Let us show that $R$ is a $\Sigma$-skew
RNP ring. Note first that $\sigma_{1,0}$ is the identity homomorphism over $R$ and $\Sigma$ is closed under composition, that is, $\sigma_{i,j}^{\alpha}\in \Sigma$, for all $\alpha \in \mathbb{N}^6$. Notice that the set of nilpotent elements of $R$ is the ideal generated by $z$, that is, $N(R)=\left \langle z \right \rangle$. Let $f,g \in N(R)$ such that $fRg=0$. Notice that $f=a^rz$ and $g=a^sz$, for some $0\leq r,s \leq 2$. Futhermore, $\sigma_{i,j}^{\alpha}(f)=a^kz$, for all $\alpha \in \mathbb{N}^6$ and some $0\leq k \leq 2$. Hence, we have $gR\sigma_{i,j}^{\alpha}(f)=0$, for all $\alpha \in \mathbb{N}^6$ with $1\leq i \leq 2$ and $0\leq j \leq 2$. In this way, $R$ is a right $\Sigma$-skew RNP ring. 
\end{example}
Due to \cite[Proposition 1.8]{Bhattacharjee2020}, $R$ is $\sigma$-skew RNP if and only if it is $\sigma^{2k-1}$-skew RNP, for all $k\geq 1$. Note that if $R$ is $\sigma$-compatible  then $R$ is $\sigma$-skew RNP if and only if it is $\sigma^{k}$-skew RNP for all $k\geq 1$, as it is shown below. Since $\sigma(N(R))\subseteq N(R)$ for every endomorphism $\sigma$ of $R$ and since right (resp., left) $\sigma$-skew reflexive rings are right (resp., left) $\sigma$-skew RNP, Proposition \ref{prop-composkewReflex} implies immediately the following:
\begin{corollary}\label{cor-composkewRNP}
Let $\sigma$ be an endomorphism of $R$.
\begin{enumerate}
\item[\rm (1)] Composition of  finite family of compatible left (resp. right) skew
NRP endomorphisms is a left (resp. right) skew NRP endomorphism.
\item[\rm (2)] If $R$ is $\sigma$-compatible  then $R$ is right (resp.,
left) $\sigma$-skew NRP if and only if it is right (resp., left)
$\sigma^{k}$-skew NRP, for
all $k\geq 1$. Thus, if $R$ is $\sigma$-compatible then $R$
is $\sigma$-skew NRP if and only if it is $\sigma^{k}$-skew NRP, for
all $k\geq 1$.
\end{enumerate}
\end{corollary}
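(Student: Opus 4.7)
The plan is to mimic the proof of Proposition \ref{prop-composkewReflex} (and Corollary \ref{cor-composkewReflexive}), restricting attention to nilpotent elements throughout. The enabling observation is Remark \ref{rem-SigmaN(R)contN(R)}: every ring endomorphism $\sigma$ of $R$ satisfies $\sigma(N(R))\subseteq N(R)$, so applying any $\sigma_i$ to an element of $N(R)$ keeps us inside $N(R)$ and the arithmetic used in the reflexive case transfers to the RNP case without change.

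For part (1), I would first handle two compatible left skew RNP endomorphisms $\sigma_1,\sigma_2$ and then iterate. Fix $a,b\in N(R)$ with $aRb=0$. Since $\sigma_2$ is left skew RNP and $a,b\in N(R)$, we obtain $\sigma_2(b)ra=0$ for every $r\in R$. By Remark \ref{rem-SigmaN(R)contN(R)}, $\sigma_2(b)\in N(R)$, so $\sigma_1(\sigma_2(b))\in N(R)$ as well. Applying $\sigma_1$ to $\sigma_2(b)ra=0$ gives $\sigma_1(\sigma_2(b))\,\sigma_1(ra)=0$, and $\sigma_1$-compatibility (which expresses $X\sigma_1(y)=0\Leftrightarrow Xy=0$) collapses this to $\sigma_1(\sigma_2(b))\cdot ra=0$ for every $r\in R$, i.e. $(\sigma_1\circ\sigma_2)(b)Ra=0$. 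Hence $\sigma_1\circ\sigma_2$ is left skew RNP. The right version is entirely symmetric: start from $bR\sigma_2(a)=0$, apply $\sigma_1$, and use $\sigma_1$-compatibility to turn $\sigma_1(br)\sigma_1(\sigma_2(a))=0$ into $br\sigma_1(\sigma_2(a))=0$. A routine induction on the size of the family finishes the general statement.

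For part (2), the forward direction is an immediate application of part (1) with $\sigma_1=\cdots=\sigma_k=\sigma$: the hypothesis that $R$ is $\sigma$-compatible makes $\sigma$ a compatible endomorphism, so $k$ copies of $\sigma$ form a compatible family of right (resp. left) skew RNP endomorphisms and their composition $\sigma^{k}$ is right (resp. left) skew RNP. The converse is trivial by specializing to $k=1$. Combining the one-sided statements yields the concluding equivalence for $\Sigma$-skew RNP.

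I do not foresee any genuine obstacle: the essential content is already encoded in Proposition \ref{prop-composkewReflex} and Remark \ref{rem-SigmaN(R)contN(R)}. The only point requiring vigilance is bookkeeping — verifying at each step that the elements one multiplies remain in $N(R)$ so that the RNP hypothesis, rather than full reflexivity, suffices. Since $\sigma(N(R))\subseteq N(R)$ for every endomorphism, this bookkeeping is automatic.
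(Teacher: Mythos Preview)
Your proposal is correct and follows exactly the approach the paper intends: the paper's own justification for this corollary is the one-line remark that $\sigma(N(R))\subseteq N(R)$ together with Proposition~\ref{prop-composkewReflex}, and your argument is precisely the unpacking of that remark---rerunning the proof of Proposition~\ref{prop-composkewReflex} while tracking that the relevant elements remain nilpotent. If anything, your write-up is more careful than the paper's terse justification (the paper's clause ``right $\sigma$-skew reflexive rings are right $\sigma$-skew RNP'' is true but logically points the wrong way; what is actually used, as you correctly identify, is Remark~\ref{rem-SigmaN(R)contN(R)} plus the verbatim compatibility manipulations).
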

\begin{proposition}\label{prop-SigmaRNPiffsigma}
If $R$ is a $\Sigma$-compatible ring, then $R$ is left (resp., right) $\Sigma$-skew RNP if and only if it is left (resp., right) $\sigma_k$-skew RNP for each $\sigma_k\in \Sigma$.
\end{proposition}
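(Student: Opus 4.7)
The plan is to mirror the proof of Proposition \ref{prop-Sigmarefiffsigma}, adapting it to the nilpotent setting by replacing references to $\Sigma$-skew reflexiveness with $\Sigma$-skew RNP and leveraging Corollary \ref{cor-composkewRNP} in place of Proposition \ref{prop-composkewReflex} and Corollary \ref{cor-composkewReflexive}. I will treat the right-skew case in detail; the left-skew case is entirely analogous, simply swapping the side on which $\sigma^{\alpha}$ acts.

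For the forward direction, assume $R$ is right $\Sigma$-skew RNP and fix any $\sigma_k\in \Sigma$. Let $\alpha=(\alpha_1,\dots,\alpha_n)\in \mathbb{N}^n$ be the multi-index with $\alpha_k=1$ and $\alpha_i=0$ for $i\neq k$, so that $\sigma^{\alpha}=\sigma_k$ and $\alpha\neq 0$. For any $a,b\in N(R)$ with $aRb=0$, the definition of right $\Sigma$-skew RNP gives $bR\sigma^{\alpha}(a)=bR\sigma_k(a)=0$. Hence $R$ is right $\sigma_k$-skew RNP.

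For the converse, assume $R$ is right $\sigma_k$-skew RNP for every $\sigma_k\in \Sigma$. Because $R$ is $\Sigma$-compatible, Remark \ref{remark-comsiiSigma} ensures that each $\sigma_k$ is an individually compatible endomorphism. Fix an arbitrary $\alpha=(\alpha_1,\dots,\alpha_n)\in \mathbb{N}^n\setminus\{0\}$. For every index $k$ with $\alpha_k\geq 1$, Corollary \ref{cor-composkewRNP}(2) gives that $R$ is right $\sigma_k^{\alpha_k}$-skew RNP; the factors with $\alpha_k=0$ are identity maps and can be ignored in the composition. Thus $\sigma^{\alpha}=\sigma_1^{\alpha_1}\circ\cdots\circ\sigma_n^{\alpha_n}$ is a finite composition of compatible right skew RNP endomorphisms, so by Corollary \ref{cor-composkewRNP}(1) the composition $\sigma^{\alpha}$ is itself right skew RNP. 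Consequently, for any $a,b\in N(R)$ with $aRb=0$ we obtain $bR\sigma^{\alpha}(a)=0$. Since $\alpha$ was arbitrary, $R$ is right $\Sigma$-skew RNP.

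There is no real obstacle here: the argument is a bookkeeping reduction from multi-indexed endomorphisms to single endomorphisms, resting entirely on (i) the definition of $\sigma^{\alpha}$ as an ordered composition, (ii) the passage from $\Sigma$-compatibility to individual $\sigma_k$-compatibility via Remark \ref{remark-comsiiSigma}, and (iii) the closure of the right skew RNP property under powers and compositions of compatible endomorphisms, which is already packaged in Corollary \ref{cor-composkewRNP}. The only minor subtlety is handling the components $\alpha_k=0$, which is dispatched by observing that $\sigma_k^{0}=\mathrm{id}_R$ contributes trivially to the composition.
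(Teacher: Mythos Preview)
Your proof is correct and follows essentially the same approach as the paper, which simply states that the argument mirrors Proposition~\ref{prop-Sigmarefiffsigma} using Corollary~\ref{cor-composkewRNP} (and Remark~\ref{rem-SigmaN(R)contN(R)}). The only cosmetic difference is that the paper also cites Remark~\ref{rem-SigmaN(R)contN(R)} explicitly, but since the nilpotency preservation $\sigma(N(R))\subseteq N(R)$ is already absorbed into the statement and justification of Corollary~\ref{cor-composkewRNP}, your omission of that citation is harmless.
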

\begin{proof}
The proof is similar to that of Proposition \ref{prop-Sigmarefiffsigma}, but using Remark \ref{rem-SigmaN(R)contN(R)} and Corollary \ref{cor-composkewRNP}.
\end{proof}
\begin{proposition}\label{prop-RNRiifSigmaRNP} If $R$ is a $\Sigma$-compatible ring, then the following assertions are equivalent:
\begin{enumerate}
\item[\rm (1)] $R$ is RNP.
\item[\rm (2)] $R$ is right  $\Sigma$-skew RNP.
\item[\rm (3)] $R$ is left  $\Sigma$-skew RNP.
\end{enumerate}
\end{proposition}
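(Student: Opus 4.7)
The plan is to prove the two equivalences (1) $\Leftrightarrow$ (2) and (1) $\Leftrightarrow$ (3) directly; since each skew version collapses to ordinary RNP once one knows that $\sigma^{\alpha}$ can be freely inserted into or removed from a zero product, the whole statement should fall out of Lemma \ref{colosss} together with Remark \ref{rem-SigmaN(R)contN(R)}. The latter guarantees that nilpotence is preserved by every $\sigma^{\alpha}$, so the nilpotent hypothesis ``$a,b\in N(R)$'' translates without friction between the three formulations.

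For (1) $\Rightarrow$ (2), fix $a,b\in N(R)$ with $aRb=0$. The RNP assumption gives $bRa=0$, i.e.\ $(br)a=0$ for every $r\in R$. Viewing this as a two-factor product, the definition of $\Sigma$-compatibility forces $(br)\sigma^{\alpha}(a)=0$ for every $\alpha\in \mathbb{N}^n$, which is exactly $bR\sigma^{\alpha}(a)=0$. For the converse (2) $\Rightarrow$ (1), the hypothesis yields $bR\sigma^{\alpha}(a)=0$ for some nonzero $\alpha$; rewriting this as $(br)\sigma^{\alpha}(a)=0$ for every $r$, Lemma \ref{colosss}(2) strips the $\sigma^{\alpha}$, yielding $bra=0$ and hence $bRa=0$.

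The equivalence (1) $\Leftrightarrow$ (3) is handled by the mirror argument, but here the $\sigma^{\alpha}$ must be inserted on the left factor instead of the right. From $bRa=0$, rebracket each $bra$ as $b\cdot (ra)$ and apply Lemma \ref{colosss}(1) to obtain $\sigma^{\alpha}(b)(ra)=0$ for every $r$, which is $\sigma^{\alpha}(b)Ra=0$. Conversely, if $\sigma^{\alpha}(b)Ra=0$ for some nonzero $\alpha$, then $\sigma^{\alpha}(b)(ra)=0$ for every $r$, and Lemma \ref{colosss}(2) returns $b(ra)=0$, i.e.\ $bRa=0$.

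No genuine obstacle appears: under $\Sigma$-compatibility the endomorphism $\sigma^{\alpha}$ is transparent to annihilator identities, and Lemma \ref{colosss} is precisely designed for this kind of insertion/deletion. The only subtlety is the asymmetry between the right and left cases: in the right case one inserts (or deletes) $\sigma^{\alpha}$ on the rightmost factor, a direct use of the $\Sigma$-compatibility definition, whereas in the left case the trick is to re-associate $b\cdot r\cdot a$ as $b\cdot(ra)$ before invoking part (1) or (2) of Lemma \ref{colosss}. Once that bracketing move is made, both directions are immediate.
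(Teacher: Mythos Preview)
Your argument is correct and in fact more economical than the paper's. The paper proves (1) $\Rightarrow$ (2) by first pushing $\sigma^{\alpha}$ onto $a$ to obtain $\sigma^{\alpha}(a)Rb=0$, invoking Remark \ref{rem-SigmaN(R)contN(R)} to certify $\sigma^{\alpha}(a)\in N(R)$, and only then applying RNP to the pair $(\sigma^{\alpha}(a),b)$; for (2) $\Rightarrow$ (1) it passes through $aR\sigma^{\alpha}(b)=0$, uses the skew hypothesis to reach $\sigma^{\alpha}(b)R\sigma^{\alpha}(a)=0$, and finishes with injectivity of $\sigma^{\alpha}$. Your route---apply RNP (or skew RNP) first to flip the factors, then insert or strip $\sigma^{\alpha}$ via compatibility---avoids both the nilpotence-preservation remark and the injectivity appeal. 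Indeed, despite announcing Remark \ref{rem-SigmaN(R)contN(R)} in your opening paragraph, you never actually use it; this is a feature, not a bug.

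One cosmetic fix: in (2) $\Rightarrow$ (1) you cite Lemma \ref{colosss}(2) to pass from $(br)\sigma^{\alpha}(a)=0$ to $(br)a=0$, but that lemma part handles $\sigma^{\beta}$ on the \emph{left} factor. What you are really using here is the ``only if'' direction of the defining biconditional of $\Sigma$-compatibility (namely $x\sigma^{\alpha}(y)=0\Leftrightarrow xy=0$). The mathematics is unaffected; just adjust the citation.
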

\begin{proof}
(1) $\Rightarrow$ (2) Suppose that $R$ is RNP and consider $a, b\in N(R)$ such that $aRb = 0$, that is, $arb = 0$, for any $r\in R$. By the $\Sigma$-compatibility of $R$, $\sigma^{\alpha}(a)rb=0$, i.e. $\sigma^{\alpha}(a)Rb=0$ (Lemma \ref{colosss}). Since $a\in N(R)$, $\sigma^{\alpha}(a)\in \sigma^{\alpha}(N(R))$, and by Remark \ref{rem-SigmaN(R)contN(R)}, we have $\sigma^{\alpha}(a)\in N(R)$. As $R$ is RNP, $\sigma^{\alpha}(a)Rb=0$ implies that $bR\sigma^{\alpha}(a)= 0$, whence $R$ is right $\Sigma$-skew RNP.

(2) $\Rightarrow$ (1) Let $R$ be right $\Sigma$-skew RNP and let $a, b\in N(R)$ such that $aRb = 0$, i.e. $arb = 0$, for any $r\in R$. Since $R$ is $\Sigma$-compatible, $ar\sigma^{\alpha}(b) = 0$, for every $\alpha\in \mathbb{N}^n$. Using that $b\in N(R)$, we obtain $\sigma^{\alpha}(b)\in \sigma^{\alpha}(N(R))$, and by Remark \ref{rem-SigmaN(R)contN(R)}, $\sigma^{\alpha}(b)\in N(R)$. Now, as $R$ is right $\Sigma$-skew RNP, then $\sigma^{\alpha}(b)R\sigma^{\alpha}(a) = 0$. So, for each $r\in R$, $\sigma^{\alpha}(bra) =
\sigma^{\alpha}(b)\sigma^{\alpha}(r)\sigma^{\alpha}(a) = 0$. Since $\Sigma$ is compatible, then
$\sigma^{\alpha}$ is injective, and so $bra = 0$, for all $r\in R$. Therefore $bRa=0$, and hence $R$ is RNP.

With a reasoning similar to the previous one, one can prove (1) $\Leftrightarrow$ (3).
\end{proof}
\begin{corollary}(\cite[Proposition 1.7]{Bhattacharjee2020}) Let $R$ be a $\sigma$-compatible ring. Then the following are equivalent: (1) $R$ is RNP (2) $R$ is right  $\sigma$-skew RNP (3) $R$ is left  $\sigma$-skew RNP.
\end{corollary}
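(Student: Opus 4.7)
The plan is to derive this corollary as an immediate specialization of Proposition \ref{prop-RNRiifSigmaRNP}, taking $\Sigma$ to be the singleton family $\{\sigma\}$. Two small bridging observations are needed to match definitions, and both are already available in the excerpt.

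First, I would observe that when $\Sigma = \{\sigma\}$, the notion of $\Sigma$-compatibility coincides with ordinary $\sigma$-compatibility: this is exactly the content of Remark \ref{remark-comsiiSigma} (or can be read off directly from the definition, since $\mathbb{N}^1 = \mathbb{N}$ and the only elements of the form $\sigma^\alpha$ are the iterates $\sigma^k$, which by $\sigma$-compatibility all satisfy $a\sigma^k(b)=0 \Leftrightarrow ab = 0$). Hence the hypothesis of the corollary matches the hypothesis of Proposition \ref{prop-RNRiifSigmaRNP} applied to $\Sigma = \{\sigma\}$.

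Next, I need to reconcile the two RNP notions: right $\Sigma$-skew RNP (with $\Sigma = \{\sigma\}$) demands $bR\sigma^\alpha(a) = 0$ for every nonzero $\alpha \in \mathbb{N}^1$, that is, $bR\sigma^k(a)=0$ for every $k \geq 1$, whereas right $\sigma$-skew RNP only demands the case $k=1$. Here Corollary \ref{cor-composkewRNP}(2) does the work: under $\sigma$-compatibility, right $\sigma$-skew RNP is equivalent to right $\sigma^k$-skew RNP for every $k \geq 1$, so the two conditions collapse to the same property. The same argument works on the left.

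With these identifications in place, the three conditions of the corollary are literally the three conditions of Proposition \ref{prop-RNRiifSigmaRNP} in the singleton case, so their equivalence follows at once. There is no real obstacle here; the only thing one must be careful about is the slight mismatch in quantification over powers of $\sigma$, which Corollary \ref{cor-composkewRNP} resolves thanks to compatibility. Thus the proof is a one-line appeal: apply Proposition \ref{prop-RNRiifSigmaRNP} to the family $\Sigma = \{\sigma\}$, invoking Remark \ref{remark-comsiiSigma} and Corollary \ref{cor-composkewRNP}(2) to match the definitions.
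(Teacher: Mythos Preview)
Your proposal is correct and matches the paper's approach: the paper states this corollary immediately after Proposition \ref{prop-RNRiifSigmaRNP} with no separate proof, so it is intended as the direct specialization to $\Sigma=\{\sigma\}$ that you describe. Your extra care in bridging the definitions via Remark \ref{remark-comsiiSigma} and Corollary \ref{cor-composkewRNP}(2) (alternatively, Proposition \ref{prop-SigmaRNPiffsigma} with $\Sigma=\{\sigma\}$ would serve the same purpose) is exactly what is implicit in the paper.
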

\begin{corollary}\label{cor-RefliifSigmaRefl} If $R$ is a $\Sigma$-compatible ring, then the following are equivalent:
\begin{enumerate}
\item[\rm (1)] $R$ is reflexive.
\item[\rm (2)] $R$ is right  $\Sigma$-skew reflexive.
\item[\rm (3)] $R$ is left  $\Sigma$-skew reflexive.
\end{enumerate}
\end{corollary}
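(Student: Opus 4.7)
The plan is to mirror the proof of Proposition \ref{prop-RNRiifSigmaRNP} almost verbatim, with the simplification that we no longer need to track the set $N(R)$, so Remark \ref{rem-SigmaN(R)contN(R)} drops out of the argument. The only real tools required are the defining property of $\Sigma$-compatibility and the injectivity of each $\sigma^{\alpha}$ (which follows from compatibility, via Remark \ref{remark-comsiiSigma} together with the fact that compatible endomorphisms are injective).

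First I would establish (1) $\Rightarrow$ (2). Assume $R$ is reflexive and pick $a,b\in R$ with $aRb=0$. Using $\Sigma$-compatibility (Lemma \ref{colosss}(1)), the equation $arb=0$ for all $r\in R$ transfers to $\sigma^{\alpha}(a)rb=0$ for every $\alpha\in\mathbb{N}^n$, i.e.\ $\sigma^{\alpha}(a)Rb=0$. Reflexivity of $R$ then yields $bR\sigma^{\alpha}(a)=0$, which is exactly the right $\Sigma$-skew reflexive condition. The converse direction (2) $\Rightarrow$ (1) runs as in Proposition \ref{prop-RNRiifSigmaRNP}: from $aRb=0$, $\Sigma$-compatibility gives $aR\sigma^{\alpha}(b)=0$; applying the right $\Sigma$-skew reflexive hypothesis with $a$ and $\sigma^{\alpha}(b)$ in place of $a$ and $b$ produces $\sigma^{\alpha}(b)R\sigma^{\alpha}(a)=0$. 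For each $r\in R$, write $\sigma^{\alpha}(bra)=\sigma^{\alpha}(b)\sigma^{\alpha}(r)\sigma^{\alpha}(a)=0$; since $\sigma^{\alpha}$ is injective (because each $\sigma_i$ is compatible, hence injective), this forces $bra=0$, and therefore $bRa=0$, establishing reflexivity.

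The equivalence (1) $\Leftrightarrow$ (3) is obtained by the symmetric argument: for the forward direction, rewrite $aRb=0$ as $\sigma^{\alpha}(a)Rb=0$ using compatibility and apply reflexivity; for the backward direction, start from $aRb=0$, pass to $\sigma^{\alpha}(a)Rb=0$, then apply the left $\Sigma$-skew reflexive hypothesis to obtain $\sigma^{\alpha}(b)R\sigma^{\alpha}(a)=0$, and finish by injectivity of $\sigma^{\alpha}$.

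There is no real obstacle here; the argument is a direct transcription of the RNP version with the restriction \emph{$a,b\in N(R)$} simply deleted throughout, and the step invoking Remark \ref{rem-SigmaN(R)contN(R)} suppressed. The one point that deserves a line of justification in the write-up is the injectivity of $\sigma^{\alpha}$ for arbitrary $\alpha\in\mathbb{N}^n$, which is what allows the cancellation $\sigma^{\alpha}(bra)=0\Rightarrow bra=0$ and is the only nontrivial ingredient beyond Lemma \ref{colosss}.
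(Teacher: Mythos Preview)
Your approach is correct and matches the paper's: the corollary is stated without proof precisely because it is the verbatim argument of Proposition~\ref{prop-RNRiifSigmaRNP} with the nilpotency restriction dropped, exactly as you describe. One small slip to fix in your sketch of $(1)\Rightarrow(3)$: you should pass from $aRb=0$ to $aR\sigma^{\alpha}(b)=0$ (not $\sigma^{\alpha}(a)Rb=0$) before applying reflexivity, so that the conclusion is $\sigma^{\alpha}(b)Ra=0$ as required for the \emph{left} skew reflexive condition.
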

\begin{remark}\label{rem-nilimplRNP}
It is straightforward to prove that nil-reversible rings are RNP.
\end{remark}
The following property relates nil-reversible and
$\Sigma$-compatible rings with $\Sigma$-skew RNP rings.
\begin{proposition}\label{prop-nil-revimplSigma}
If $R$ is  nil-reversible and $\Sigma$-compatible then $R$ is $\Sigma$-skew RNP.
\end{proposition}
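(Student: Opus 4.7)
The proof reduces cleanly to composing two results already available in the paper, so my plan is essentially a short chain of implications rather than a new construction.

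First I would observe that nil-reversibility means that whenever $a,b\in N(R)$ satisfy $ab=0$, we also have $ba=0$. By Remark \ref{rem-nilimplRNP} this already yields that $R$ is an RNP ring: if $a,b\in N(R)$ and $aRb=0$, then in particular $a(rb)=0$ for every $r\in R$, and $rb\in N(R)$ because $N(R)$ is easily seen to be an ideal in the nil-reversible setting (or alternatively one uses that $arb=0$ together with nil-reversibility applied to the nilpotent pair $arb,1$-style arguments used in the cited remark) to conclude $bRa=0$.

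Now since $R$ is $\Sigma$-compatible, I would invoke Proposition \ref{prop-RNRiifSigmaRNP}, which under the $\Sigma$-compatibility hypothesis establishes the equivalence of the three conditions: $R$ is RNP, $R$ is right $\Sigma$-skew RNP, and $R$ is left $\Sigma$-skew RNP. Combining this with the previous step, both the right and the left $\Sigma$-skew RNP conditions hold, and therefore $R$ is $\Sigma$-skew RNP by definition.

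If one prefers a direct argument avoiding the intermediate step through RNP, the scheme would be as follows. Let $a,b\in N(R)$ with $aRb=0$, and let $\alpha\in\mathbb{N}^n\setminus\{0\}$. By $\Sigma$-compatibility (Lemma \ref{colosss}), $arb=0$ for all $r\in R$ implies $ar\sigma^\alpha(b)=0$ and $\sigma^\alpha(a)rb=0$ for all $r\in R$. Since $\sigma^\alpha(N(R))\subseteq N(R)$ by Remark \ref{rem-SigmaN(R)contN(R)}, both $\sigma^\alpha(a)$ and $\sigma^\alpha(b)$ are nilpotent. Applying nil-reversibility to the nilpotent pairs appearing inside the products $ar\sigma^\alpha(b)$ and $\sigma^\alpha(a)rb$, one obtains respectively $\sigma^\alpha(b)ra=0$ and $br\sigma^\alpha(a)=0$ for every $r\in R$, which are exactly the left and right $\Sigma$-skew RNP conditions. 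The only delicate point here is ensuring that nil-reversibility applies to the relevant products (the elements $ar$ and $rb$ need not be nilpotent themselves), but this is handled by reducing to $aRb=0 \Rightarrow (ra)(rb)=0$-type rearrangements, which is precisely what the passage through RNP in Remark \ref{rem-nilimplRNP} encapsulates.

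I expect no real obstacle: the statement is almost tautological once one has Remark \ref{rem-nilimplRNP} and Proposition \ref{prop-RNRiifSigmaRNP} in hand, and the shortest proof is simply to string them together.
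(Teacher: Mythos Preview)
Your core argument is correct and essentially matches the paper's approach: both proofs invoke Remark \ref{rem-nilimplRNP} to pass from nil-reversibility to RNP (i.e., to obtain $bRa=0$ from $aRb=0$ for $a,b\in N(R)$), and then use $\Sigma$-compatibility to upgrade this to the $\Sigma$-skew RNP conditions. The only cosmetic difference is that the paper carries out the second step by hand (applying compatibility and Lemma \ref{colosss} directly to $bRa=0$ to get $bR\sigma^\alpha(a)=0$ and $\sigma^\alpha(b)Ra=0$), whereas you package it as a citation of Proposition \ref{prop-RNRiifSigmaRNP}; your route is in fact slightly cleaner.

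One comment: your attempted ``direct argument'' in the second paragraph runs into exactly the obstacle you flag---applying nil-reversibility to $ar\sigma^\alpha(b)=0$ requires $ar$ (or $r\sigma^\alpha(b)$) to be nilpotent, which is not given. You correctly retreat to the RNP route, but the hand-waving about ``$(ra)(rb)=0$-type rearrangements'' and ``$arb,1$-style arguments'' should simply be dropped; Remark \ref{rem-nilimplRNP} is a black box here and needs no re-justification.
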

\begin{proof}
Let $\Sigma=\{\sigma_1,\dots,\sigma_n\}$ be a finite family of endomorphisms of a nil-reversible and $\Sigma$-compatible ring $R$. Suppose that $aRb=0$, for $a,b\in N(R)$. By Remark \ref{rem-nilimplRNP}, $bRa=0$, and so $bR\sigma^{\alpha}(a)=0$, since $R$ is $\Sigma$-compatible. Thus $R$ is right $\Sigma$-skew RNP. From Lemma \ref{colosss}, we have that $bRa=0$ implies $\sigma^{\alpha}(b)Ra=0$, and therefore $R$ is left $\Sigma$-skew RNP. 
\end{proof}
Due to \cite[Proposition 3.2.1]{Fajardoetal2020}, a skew PBW extension $A$ over a domain $R$ is also a domain, and so $N(A)=0$. In this way, if $A$ is a skew PBW extension over a domain $R$ which satisfies the conditions established in Proposition \ref{prop-indicsigma}, then $A$ is $\overline{\Sigma}$-skew RNP for
$\overline{\Sigma}=\{\overline{\sigma_1}, \dots, \overline{\sigma_n} \}$, where $\overline{\sigma_k}$ is as in Proposition \ref{prop-indicsigma}.
\begin{proposition}\label{prop-Sigmarigid}
If $A=\sigma(R)\langle x_1, \dots, x_n\rangle$ is skew PBW extension over a $\Sigma$-rigid ring $R$, then the following assertions hold:
\begin{enumerate}
\item[\rm (1)] Both $R$ and $A$ are reflexive.
\item[\rm (2)] $R$ is $\Sigma$-skew RNP.
\item[\rm (3)] If the conditions established in Proposition \ref{prop-indicsigma} hold, then $A$ is  $\overline{\Sigma}$-skew RNP.
\end{enumerate}
\end{proposition}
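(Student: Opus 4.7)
The plan is to exploit two foundational facts about $\Sigma$-rigid rings, from which all three assertions will follow quickly: (i) any $\Sigma$-rigid ring is reduced (and $\Sigma$-compatible), and (ii) a skew PBW extension over a $\Sigma$-rigid ring is itself a reduced ring, a result that has been established in the authors' earlier work on skew PBW extensions (essentially by combining the injectivity built into the $\Sigma$-rigid condition with the commutation rules of Proposition \ref{coefficientes}).

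For part (1), I would first observe that $R$ reduced follows immediately from the definition of $\Sigma$-rigidity by taking $\alpha=0\in\mathbb{N}^{n}$: if $a^{2}=0$, then $a\sigma^{0}(a)=a^{2}=0$, forcing $a=0$. Reduced rings are reversible, and reversible rings are reflexive (as recalled in Section \ref{sect.prelim}), so $R$ is reflexive. For $A$, I would cite the result that $R$ being $\Sigma$-rigid makes $A$ reduced; once this is in hand, $A$ reduced implies $A$ reflexive by the same chain of implications.

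Part (2) and part (3) then become essentially vacuous. Since $R$ is reduced we have $N(R)=\{0\}$, so the only elements $a,b\in N(R)$ satisfying $aRb=0$ are $a=b=0$, and then both $bR\sigma^{\alpha}(a)=0$ and $\sigma^{\alpha}(b)Ra=0$ hold trivially for every $\alpha\in\mathbb{N}^{n}\setminus\{0\}$; hence $R$ is $\Sigma$-skew RNP. For (3), the hypotheses of Proposition \ref{prop-indicsigma} guarantee that $\overline{\Sigma}=\{\overline{\sigma_{1}},\dots,\overline{\sigma_{n}}\}$ is a well-defined family of injective endomorphisms of $A$; and since $A$ is reduced by the result invoked above, $N(A)=\{0\}$, so the $\overline{\Sigma}$-skew RNP condition is satisfied vacuously, exactly as in (2).

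The only non-routine step is the reducedness of $A$ over a $\Sigma$-rigid ring $R$, which is the main obstacle; however, this is precisely the content of a known characterization from \cite{ReyesSuarez2018-3} (and already indicated in the paper's preliminary section through the definition of $\Sigma$-rigidity and the compatibility framework), so the proof reduces to citing that result and then collecting the vacuous/trivial consequences described above.
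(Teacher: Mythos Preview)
Your proposal is correct and follows essentially the same approach as the paper: both arguments reduce everything to the fact (from \cite[Theorem 4.4]{ReyesSuarez2018-3}) that $R$ and $A$ are reduced when $R$ is $\Sigma$-rigid, and then observe that reduced rings are reflexive and trivially $\Sigma$-skew RNP since $N(\cdot)=\{0\}$. The paper's proof is slightly terser in that it cites the reducedness of both $R$ and $A$ directly from that theorem rather than deriving $R$ reduced by hand via $\alpha=0$, but the logic is identical.
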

\begin{proof}
(1) By \cite[Theorem 4.4]{ReyesSuarez2018-3}, we have that both $R$ and $A$ is reduced. Since reduced rings are reflexive, then $R$ and $A$ are
reflexive.

(2) As $R$ is reduced, $R$ is $\Sigma$-skew RNP for any finite family of endomorphisms $\Sigma$.

(3) Since $A$ is reduced, it follows that $A$ is $\Sigma$-skew RNP for any finite family of endomorphisms. In particular, $A$ is  $\overline{\Sigma}$-skew RNP for
$\overline{\Sigma}=\{\overline{\sigma_1}, \dots, \overline{\sigma_n}
\}$, where $\overline{\sigma_k}$ is as in Proposition \ref{prop-indicsigma}.
\end{proof}
Following Kwak and Lee \cite{KwakLee2011}, a ring $R$ is called {\em CN} if $N(R[x])\subseteq N(R)[x]$. Both NI rings and Armendariz rings are CN but not conversely. The classes of quasi-Armendariz and CN rings are independent of each other (Kheradmand et al. \cite[Examples 3.15(1) and (2)]{Kheradmandetal2017}). Bhattacharjee \cite{Bhattacharjee2020} showed that if a ring $R$ is $CN$ and quasi-Armendariz, then there is an equivalence of the RNP property between the ring $R$ and its ring of polynomials with coefficients in $R$. 

\vspace{0.2cm}

Motivated by the previous facts, we introduce the following notion with the aim of study the RNP property on Ore extensions and skew PBW extensions.
\begin{definition}\label{Def-CN}
Let $A = \sigma(R)\langle x_1,\dotsc, x_n\rangle$ be a skew PBW extension over $R$. $R$ is said to be $\Sigma$-\emph{skew CN} if $N(A) \subseteq N(R)A$, where $N(R)A:=\{f=\sum_{i=0}^ta_iX_i\in A\mid a_i\in N(R)$, for  $0\leq i\leq
t\}$.
\end{definition}
Hashemi et al. \cite{HashemiKhalilAlhevaz2019}, studied the connections of the prime radical and the upper nil radical of $R$ with the prime radical and the upper nil radical of the extension skew PBW. For instance, some results establish sufficient conditions to guarantee that $N^{*}(A)\subseteq N^{*}(R)A$, for a bijective skew PBW extension $A$ over a $(\Sigma,\Delta)$-compatible ring $R$ \cite[Theorem 3.15]{HashemiKhalilAlhevaz2019}. In this way, if $A$ is NI, it follows that $R$ is $\Sigma$-skew CN. Other important result states that if $A$ is a PBW extension over a $\Sigma$-compatible ring $R$ and $N(R)$ is a $\Delta$-ideal, then $N(A)\subseteq N(R)A$ \cite[Proposition 4.1]{HashemiKhalilAlhevaz2019}, that is, $R$ is a $\Sigma$-skew CN. From \cite[Corollary 4.12]{HashemiKhalilAlhevaz2019}, it follows that if $R$ is a 2-primal $\Sigma$-compatible ring, then $R$ is $\Sigma$-skew CN.

\vspace{0.2cm}

Reyes and Su\'arez \cite{ReyesSuarez2019-2} defined and studied the weak $(\Sigma,\Delta)$-compatible property for skew PBW extensions. In particular, it was proved that if $A$ is a skew PBW extension over a weak $(\Sigma,\Delta)$-compatible NI ring $R$, then $f=\sum_{i=0}^ta_iX_i\in N(A)$ if and only if $a_i\in N(R)$, for $0\leq i\leq t$ \cite[Theorem 4.6]{ReyesSuarez2019-2}. Thus, if $f\in N(A)$, then $a_i\in N(R)$, for $0\leq i\leq t$, whence $f\in N(R)A$. This means that if $A$ is a skew PBW extension over a weak $(\Sigma,\Delta)$-compatible NI ring $R$, then $A$ is $\Sigma$-skew CN.
\begin{example} We present some examples of $\Sigma$-skew CN rings.
\begin{itemize}
      \item[(1)] Let $q, h \in \Bbbk$, $q \neq 0$. Consider the algebra of {\em q-differential operators} $D_{q,h}[x, y]=\Bbbk[y][x, \sigma, \delta]$ with $\sigma(y)=qy$ and $\delta(y)=h$. This class of algebras satisfies that $N(D_{q,h}[x, y])\subseteq N(\Bbbk[y])[x, \sigma,\delta]$.
      Since Ore extensions of injective type are skew PBW extensions, we have other more general examples of Ore extensions that
      are $\Sigma$-skew CN: if $R$ is a $\sigma$-compatible ring and $N(R)$ a $\delta$-ideal of $R$, then by \cite[Proposition 2.2]{Nasr-Isf2014}, $N(R[x;\sigma,\delta]) \subseteq N(R)[x;\sigma,\delta]$, i.e. $R$ is $\Sigma$-skew CN. If $R[x;\sigma, \delta]$ is NI and $N(R)$ is $\sigma$-rigid then by \cite[Theorem 3.1]{Nasr-Isf2015}, we have that $N(R[x;\sigma, \delta]) = N(R)[x;\sigma,\delta]$, so $R$ is $\Sigma$-skew CN.
      \item[(2)] If $R$ is a commutative ring and $\mathfrak{g}$ is a finite dimensional Lie algebra over $R$ with basis $\left \{x_1,\dots,x_n \right \}$, then the {\it universal enveloping algebra} of $\mathfrak{g}$, denoted $U(\mathfrak{g})$, is a PBW extension over $R$. The set of nilpotent elements of the enveloping algebra satisfies  $N(A)=N(U(\mathfrak{g}))\cong N(\sigma(R)\left \langle x_1, \dots, x_n \right \rangle)\subseteq N(R)A$.
      \item[(3)] Consider the ring $A$ presented in Example \ref{ejemploraro}. Let $f \in A$, such that $f^l=0$, for some $l \in \mathbb{N}$, where $f=a_mX_m + \cdots+ a_1X_1+a_0 = a_mX_m + h$ with $deg(X_i)=\alpha_i$ and $h=a_{m-1}X_m + \cdots + a_0$. Each $X_i$ is an element of the set $\left \{ \prod_{i,j,k}X_{i,j}^{\alpha_k} \ | \ 1\leq k \leq 6 \right \}$. Thus, we have that $0=f^l=(a_mX_m+ \Delta)^l =  a_m\prod_{k=1}^{l-1}\sigma_{i,j}^{k\alpha_m}(a_m)X_m+\ {\rm lower\ terms} $. Since $a_m\prod_{k=1}^{l-1}\sigma_{i,j}^{k\alpha_m}(a_m)=0$, it follows that $a_m^l=0$, by using $\Sigma$-compatibility. Hence, we obtain $a_m \in N(R)=\left \langle z \right \rangle$. On the other hand, we can observe that $0=f^l=(a_mX_m + h)^l= Q + h^l$, where $h=a_{m-1}X_m + \cdots + a_0$ and each coefficient of $Q$ can be expressed in $a_m$ and $\sigma_{i,j}^{k\beta}(a_m)$, for some $\beta \in \mathbb{N}^6$ with $k \in \mathbb{N}$. From the above, it follows that $h^l \in N(R)\left \langle x_{1,0},x_{1,1},x_{1,2},x_{2,0},x_{2,1},x_{2,2}\right \rangle$. Finally, if $h={m-1}X_m + \cdots + a_0$, then $h^l=(a_{m-1}X_m + \cdots + a_0)^l= a_{m-1}\prod_{k=1}^{l-1}\sigma_{i,j}^{k\alpha_{m-1}}(a_{m-1})X_{m-1}+\ {\rm lower\ terms}$. By the same previous argument, we have $a_{m-1}\in N(R)$. By continuing this process, we can see that $a_i \in N(R)=\left \langle z \right \rangle$, for all $0 \leq i \leq m$. Therefore, we have $f \in N(R)\left \langle x_{1,0},x_{1,1},x_{1,2},x_{2,0},x_{2,1},x_{2,2}\right \rangle$, that is, $R$ is a $\Sigma$-skew CN ring.
\end{itemize}
\end{example}
\begin{theorem}\label{AskewRNP} Let $A=\sigma(R)\langle
x_1,\dots,x_n\rangle$ be a skew PBW extension over a $(\Sigma,\Delta)$-compatible and $\Sigma$-skew CN ring $R$ that satisfies the {\rm (}SQA1{\rm )} condition. If the conditions established in Proposition \ref{prop-indicsigma} hold, then $R$ is right $\Sigma$-skew RNP if and only if $A$ is right $\overline{\Sigma}$-skew reflexive RNP.
\end{theorem}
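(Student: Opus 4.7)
The plan is to prove both implications by lifting/restricting the defining zero-product identity between $R$ and $A$, using $\Sigma$-skew CN to put nilpotent elements of $A$ into $N(R)A$, (SQA1) to reduce products in $A$ to products in $R$, and $(\Sigma,\Delta)$-compatibility to go back from $R$ to $A$.

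For the backward direction, assume $A$ is right $\overline{\Sigma}$-skew RNP and take $a,b\in N(R)$ with $aRb=0$; clearly $a,b\in N(A)$. I would first show that $aAb=0$. For any $h=\sum_k c_k Z_k\in A$, Proposition \ref{coefficientes} lets me write $Z_k b = \sigma^{\beta_k}(b)Z_k + p_{\beta_k,b}$ where $p_{\beta_k,b}$ has strictly lower degree and its coefficients are obtained from $b$ by compositions of elements of $\Sigma$ and $\Delta$. Since $aRb=0$ gives $ac_k b=0$, the $(\Sigma,\Delta)$-compatibility together with Lemma \ref{colosss} forces $ac_k\sigma^{\theta}(b)=ac_k\delta^{\beta}(b)=0$ for all $\theta,\beta\in\mathbb{N}^{n}$, so $a(c_k Z_k)b=0$, and hence $aAb=0$. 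Now, $A$ being right $\overline{\Sigma}$-skew RNP yields $bA\overline{\sigma}^{\alpha}(a)=0$ for every nonzero $\alpha\in\mathbb{N}^{n}$. Since $a\in R$, Proposition \ref{prop-indicsigma} gives $\overline{\sigma}^{\alpha}(a)=\sigma^{\alpha}(a)$, and restricting to $R\subseteq A$ we obtain $bR\sigma^{\alpha}(a)=0$, which shows that $R$ is right $\Sigma$-skew RNP.

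For the forward direction, assume $R$ is right $\Sigma$-skew RNP and take $f=\sum_i a_iX_i,\ g=\sum_j b_jY_j\in N(A)$ with $fAg=0$. The $\Sigma$-skew CN hypothesis gives $N(A)\subseteq N(R)A$, so every $a_i,b_j$ lies in $N(R)$. Applying the (SQA1) condition to $fAg=0$ produces $a_iRb_j=0$ for every $i,j$. Since the $a_i,b_j$ are nilpotent and $R$ is right $\Sigma$-skew RNP, this yields $b_jR\sigma^{\alpha}(a_i)=0$ for every $i,j$ and every nonzero $\alpha\in\mathbb{N}^{n}$. It remains to propagate this from $R$ to $A$ and derive $gA\overline{\sigma}^{\alpha}(f)=0$, where $\overline{\sigma}^{\alpha}(f)=\sum_i\sigma^{\alpha}(a_i)X_i$.

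To obtain $gA\overline{\sigma}^{\alpha}(f)=0$, fix an arbitrary $h=\sum_k c_kZ_k\in A$ and expand the triple product $ghH$, with $H=\overline{\sigma}^{\alpha}(f)$, via the commutation rules of Proposition \ref{coefficientes}: moving $c_k$ past $Y_j$ and then $\sigma^{\alpha}(a_i)$ past the standard monomials produces expressions whose scalar parts are finite sums of terms of the form $b_j\,u\,\tau(\sigma^{\alpha}(a_i))$, where $u\in R$ depends on $c_k$ (through the $\sigma$'s and $\delta$'s of the commutations) and $\tau$ is a composition of elements of $\Sigma$ and $\Delta$. Starting from $b_jRb'=0$ with $b'=\sigma^{\alpha}(a_i)$, the $(\Sigma,\Delta)$-compatibility of $R$ together with Lemma \ref{colosss} yields $b_ju\sigma^{\theta}(b')=b_ju\delta^{\beta}(b')=0$ for all $\theta,\beta$, so every such term vanishes. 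Hence every coefficient of $gh\overline{\sigma}^{\alpha}(f)$ is zero, and $gA\overline{\sigma}^{\alpha}(f)=0$ as required. The main obstacle is precisely this last bookkeeping step: controlling the mixed $\sigma$/$\delta$ compositions that appear in all lower-degree pieces during the repeated reshuffling, and consistently reducing them to the single ground identity $b_jRb'=0$ via the compatibility lemmas.
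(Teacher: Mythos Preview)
Your proposal is correct and follows essentially the same strategy as the paper's proof: in both directions you use $\Sigma$-skew CN to place the coefficients of nilpotent elements of $A$ in $N(R)$, (SQA1) to obtain $a_iRb_j=0$, the right $\Sigma$-skew RNP property of $R$ to get $b_jR\sigma^{\alpha}(a_i)=0$, and $(\Sigma,\Delta)$-compatibility together with the commutation rules of Proposition~\ref{coefficientes} to push the vanishing back up to $gA\overline{\sigma}^{\alpha}(f)=0$ (and symmetrically $aAb=0$ in the converse). Your write-up is in fact more explicit than the paper's about the shape of the coefficients and the role of Lemma~\ref{colosss} in annihilating the mixed $\sigma/\delta$ terms, but the underlying argument is the same.
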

\begin{proof}
Let $R$ be a right $\Sigma$-skew RNP ring and consider two nilpotent elements of $A$, that is, $f = \sum_{i=0}^t a_i X_i$, $g = \sum_{j=0}^m b_jY_j \in N(A)$ with ${\rm deg}(X_i)=\alpha_i$, for $1 \leq i \leq t$ and ${\rm deg}(Y_j)=\beta_j$, for $1 \leq j \leq m$, respectively, such that $fAg = 0$. Since $R$ satisfies (SQA1) condition, then $a_iRb_j = 0$, for all $i, j$. On the other hand, $R$ is $\Sigma$-skew CN, that is, $f,g \in N(R)A$, which implies that $a_i, b_j \in N(R)$ for all $i, j$. By using the $\Sigma$-skew RNP property of $R$, we have $b_j R\sigma^{\alpha}(a_i) = 0$, for all $i, j$ and $\alpha \in \mathbb{N}^n\ \backslash\ \{0\}$. Consider an element $h\in A$. We can note that each coefficient of $gh\overline{\sigma}^{\alpha}(f)$ are products of the coefficient $b_j$ with elements of $R$ and several evaluations of $a_i$ in $\sigma$'s and $\delta$'s depending of the coordinates of $\alpha_i, \beta_j$. From the previous statement and the fact that $b_j R\sigma^{\alpha}(a_i) = 0$, by using of the $(\Sigma,\Delta)$-compatibility of $R$, it follows that  $gh\overline{\sigma}^{\alpha}(f) = 0$, for all $\alpha \in  \mathbb{N} \ \backslash\ \{0\}$. Since $h$ is an arbitrary element of $A$, we have $gA\overline{\sigma}^{\alpha}(f) = 0$. Hence, $A$ is right $\overline{\Sigma}$-skew RNP. 

Conversely, suppose that $A$ is a right $\overline{\Sigma}$-skew RNP ring and $a,b \in N(R)$ such that $aRb=0$. Since $R$ is $(\Sigma,\Delta)$-compatible, we have $aAb=0$. This means that $bA\overline{\sigma}^{\alpha}(a)=0$, for all $\alpha \in \mathbb{N}^n$ entailing $bR\sigma^{\alpha}(a)=0$, by using RNP property of $A$ and $\overline{\sigma}^{\alpha}(a)\in N(R)$. Therefore, $R$ is right $\Sigma$-skew RNP.
\end{proof}
\begin{corollary}(\cite[Corollary 2.13]{Bhattacharjee2020}) If $R$ is an Armendariz ring with an endomorphism $\sigma$, then, $R$ is right $\sigma$-skew RNP if and only if $R[x]$ is right $\overline{\sigma}$-skew RNP.
\end{corollary}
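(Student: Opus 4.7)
The plan is to give a direct proof modeled on the argument of Theorem~\ref{AskewRNP}, specialized to the case of the commutative polynomial ring $A=R[x]$ with its standard coefficient-wise extension $\overline{\sigma}\bigl(\sum a_i x^i\bigr)=\sum \sigma(a_i)x^i$. The Armendariz hypothesis supplies the two structural properties that play the roles of the (SQA1) condition and the $\Sigma$-skew CN property in the general theorem: Armendariz rings are quasi-Armendariz, so $fR[x]g=0$ forces $a_iRb_j=0$ for all coefficients, and Armendariz rings are CN, so $N(R[x])\subseteq N(R)[x]$. Both facts are recorded in the preliminaries. In contrast with the general theorem, the full $(\Sigma,\Delta)$-compatibility machinery is unnecessary here because the PBW structure of $R[x]$ is trivial: the associated endomorphism is $\mathrm{id}_R$ and the derivation is zero.

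For the forward direction, assume $R$ is right $\sigma$-skew RNP, and take $f=\sum_{i=0}^{m} a_i x^i$ and $g=\sum_{j=0}^{n} b_j x^j$ in $N(R[x])$ with $fR[x]g=0$. By CN each $a_i,b_j\in N(R)$, and by quasi-Armendariz $a_iRb_j=0$ for all $i,j$. The $\sigma$-skew RNP property of $R$ then gives $b_jR\sigma(a_i)=0$ for all $i,j$. Expanding $gh\overline{\sigma}(f)$ for an arbitrary $h=\sum_k c_k x^k\in R[x]$, every coefficient is a finite sum of terms of the form $b_j c_k \sigma(a_i)$, each of which vanishes. Hence $gR[x]\overline{\sigma}(f)=0$, so $R[x]$ is right $\overline{\sigma}$-skew RNP.

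For the converse, assume $R[x]$ is right $\overline{\sigma}$-skew RNP and let $a,b\in N(R)$ with $aRb=0$. Viewed as constants, $a,b\in N(R[x])$, and $a(cx^k)b=(acb)x^k=0$ for every $c\in R$ and $k\geq 0$, so $aR[x]b=0$. The hypothesis yields $bR[x]\overline{\sigma}(a)=bR[x]\sigma(a)=0$, and restricting to constant multipliers gives $bR\sigma(a)=0$. Thus $R$ is right $\sigma$-skew RNP.

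No substantial obstacle arises: once Armendariz is unpacked into its quasi-Armendariz and CN consequences, the remainder is a direct coefficient-level computation. The only subtlety worth flagging is that the endomorphism $\sigma$ here is freely chosen and is not part of any PBW datum, so the statement does not reduce to a literal instance of Theorem~\ref{AskewRNP}; rather, it reruns that proof strategy, with the Armendariz hypothesis supplying the role that $(\Sigma,\Delta)$-compatibility and $\Sigma$-skew CN play in the general setting.
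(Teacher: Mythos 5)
Your proof is correct and follows essentially the route the paper intends: the paper states this corollary without proof as a specialization of Theorem~\ref{AskewRNP} to $A=R[x]$, with Armendariz supplying the quasi-Armendariz (i.e.\ (SQA1)) and CN (i.e.\ $\Sigma$-skew CN) hypotheses and the compatibility conditions holding trivially since the PBW datum of $R[x]$ is $(\mathrm{id}_R,0)$. Your remark that the arbitrary endomorphism $\sigma$ is not the PBW-defining endomorphism, so that one must rerun the theorem's argument rather than cite it verbatim, is an accurate and worthwhile observation about a point the paper glosses over.
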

\begin{corollary}
Let $R[x;\sigma]$ be a skew polynomial ring and $\sigma'$ be an endomorphism of $R$. If $R$ satisfies the (SQA1) condition, is skew CN and $\sigma$-compatible, then $R$ is right $\sigma'$-skew RNP if and only if $R[x;\sigma]$ is right $\overline{\sigma'}$-skew RNP.
\end{corollary}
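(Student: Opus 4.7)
The plan is to obtain this corollary as a direct specialization of Theorem \ref{AskewRNP} applied to the Ore extension $R[x;\sigma]$, which is a skew PBW extension $A=\sigma(R)\langle x\rangle$ with $n=1$, PBW family of endomorphisms $\Sigma_{A}=\{\sigma\}$, and derivation family $\Delta_{A}=\{0\}$. Under this identification, I would systematically translate each hypothesis of the theorem: the $(\Sigma_{A},\Delta_{A})$-compatibility of $R$ reduces to $\sigma$-compatibility, since the zero derivation renders $\Delta_{A}$-compatibility automatic; the $\Sigma_{A}$-skew CN assumption $N(A)\subseteq N(R)A$ is exactly the hypothesis that $R$ is skew CN; the (SQA1) condition is imported directly. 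The commutation conditions requested by Proposition \ref{prop-indicsigma} ($\sigma_{i}\delta_{j}=\delta_{j}\sigma_{i}$, $\delta_{i}\delta_{j}=\delta_{j}\delta_{i}$, and the vanishing of $\delta_{k}$ on the structure constants $d_{i,j},r_{l}^{(i,j)}$) all hold vacuously in this single-variable, zero-derivation setting, so the natural extension $\overline{\sigma'}$ of $\sigma'$ to $R[x;\sigma]$ is well-defined.

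With the setting in place, I would parallel the two directions of the proof of Theorem \ref{AskewRNP}, simply substituting $\sigma'$ for $\sigma$ in the RNP portion. For the forward direction, take $f=\sum_{i}a_{i}x^{i}$ and $g=\sum_{j}b_{j}x^{j}$ in $N(R[x;\sigma])$ with $fR[x;\sigma]g=0$; the (SQA1) condition yields $a_{i}Rb_{j}=0$ for every pair $i,j$, the skew CN hypothesis forces $a_{i},b_{j}\in N(R)$, and the right $\sigma'$-skew RNP of $R$ delivers $b_{j}R{\sigma'}^{k}(a_{i})=0$ for every $k\geq 1$. Then, for an arbitrary $h\in R[x;\sigma]$, I would expand $g\cdot h\cdot \overline{\sigma'}^{k}(f)$ using the commutation rule $xr=\sigma(r)x$ and invoke $(\Sigma_{A},\Delta_{A})$-compatibility (via Lemma \ref{colosss} and Lemma \ref{lemma-NilSigma-comp}) to conclude that each coefficient product collapses from the base identity $b_{j}R{\sigma'}^{k}(a_{i})=0$. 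This gives $gR[x;\sigma]\overline{\sigma'}^{k}(f)=0$, so $R[x;\sigma]$ is right $\overline{\sigma'}$-skew RNP.

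For the converse, the argument is short: given $a,b\in N(R)$ with $aRb=0$, the $\sigma$-compatibility of $R$ lifts this relation to $aR[x;\sigma]b=0$ (as in the proof of Theorem \ref{AskewRNP}), and then the right $\overline{\sigma'}$-skew RNP property of $R[x;\sigma]$ yields $bR[x;\sigma]\overline{\sigma'}^{k}(a)=0$; reading off the constant coefficients restricts this equality to $bR{\sigma'}^{k}(a)=0$ in $R$, so $R$ is right $\sigma'$-skew RNP.

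I expect the main technical obstacle to be the forward coefficient computation, since the non-commutative rule $xr=\sigma(r)x$ means that the coefficients of $g\cdot h\cdot \overline{\sigma'}^{k}(f)$ are not merely products of the $b_{j}$'s and the coefficients of $h$ with $\sigma'$-iterates of the $a_{i}$'s, but rather those products twisted by assorted powers of $\sigma$. To manage this, I would appeal to Lemma \ref{lemma-NilSigma-comp} repeatedly, which ensures that nilpotence and annihilation relations are invariant under insertion of arbitrary $\sigma^{\alpha}$'s on either side; combined with $\Delta_{A}=\{0\}$, this removes all bookkeeping of derivations and reduces the verification to the same endomorphism-transfer argument already carried out in Theorem \ref{AskewRNP}.
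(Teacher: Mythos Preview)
Your approach—reading the corollary as the single-variable, zero-derivation instance of Theorem \ref{AskewRNP} and re-running its proof with the RNP endomorphism $\sigma'$ decoupled from the PBW endomorphism $\sigma$—is exactly what the paper intends (the corollary is stated without proof, immediately after the theorem). The forward and converse arguments you sketch are correct and track the theorem's proof line by line.

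There is one point to tighten. You invoke Proposition \ref{prop-indicsigma} to conclude that the extension $\overline{\sigma'}$ is a well-defined endomorphism of $R[x;\sigma]$, on the grounds that the commutation hypotheses of that proposition hold vacuously when $n=1$ and $\delta=0$. But Proposition \ref{prop-indicsigma} only asserts that the members of the \emph{PBW family} $\Sigma_{A}=\{\sigma\}$ extend to endomorphisms of $A$; it says nothing about an arbitrary endomorphism $\sigma'$ of $R$. For the map $\overline{\sigma'}\bigl(\sum a_{i}x^{i}\bigr):=\sum\sigma'(a_{i})x^{i}$ to be multiplicative on $R[x;\sigma]$ one needs $\sigma'\circ\sigma=\sigma\circ\sigma'$: compare $\overline{\sigma'}(x\cdot r)=\sigma'(\sigma(r))x$ with $\overline{\sigma'}(x)\cdot\overline{\sigma'}(r)=\sigma(\sigma'(r))x$. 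The corollary as printed in the paper omits this hypothesis as well, so the issue is really a gap in the statement rather than in your strategy; still, you should record the commutation condition $\sigma\sigma'=\sigma'\sigma$ explicitly rather than appeal to Proposition \ref{prop-indicsigma}, which does not cover it.
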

\begin{corollary}(\cite[Proposition 2.12]{Bhattacharjee2020}) Let $R$ be a ring with an endomorphism $\sigma$. Suppose that $R$ is quasi-Armendariz and CN. Then, $R$ is right $\sigma$-skew RNP if and only if $R[x]$ is right $\overline{\sigma}$-skew RNP.
\end{corollary}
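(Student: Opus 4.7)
The plan is to run the method of proof of Theorem \ref{AskewRNP}, adapted to the specific case of the commutative polynomial ring $R[x]$ with the auxiliary endomorphism $\sigma$ extended coefficient-wise to $\overline{\sigma}$. The essential point is that the hypothesis that $R$ is CN is tailored precisely so that $N(R[x])\subseteq N(R)[x]$ plays the role of $\Sigma$-skew CN in this simpler setting, while the quasi-Armendariz hypothesis is the (SQA1) condition specialized to $R[x]$; no derivations or noncommuting indeterminate enter, so the $(\Sigma,\Delta)$-compatibility apparatus of Theorem \ref{AskewRNP} collapses to the commutativity of $x$ with elements of $R$.

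For the forward direction, assume $R$ is right $\sigma$-skew RNP and take $f=\sum_{i=0}^{t} a_i x^{i}$ and $g=\sum_{j=0}^{m} b_j x^{j}$ in $N(R[x])$ with $fR[x]g=0$. The CN hypothesis forces $a_i,b_j\in N(R)$ for all $i,j$. The quasi-Armendariz hypothesis then upgrades $fR[x]g=0$ to $a_iRb_j=0$ for all $i,j$. Since $a_i,b_j\in N(R)$ and $R$ is right $\sigma$-skew RNP, we obtain $b_jR\sigma(a_i)=0$ for all $i,j$. For an arbitrary $h=\sum_{k} c_k x^{k}\in R[x]$, the expansion $g\,h\,\overline{\sigma}(f)=\sum_{i,j,k}(b_j c_k \sigma(a_i))x^{i+j+k}$ has every coefficient equal to zero, hence $gR[x]\overline{\sigma}(f)=0$, and $R[x]$ is right $\overline{\sigma}$-skew RNP.

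For the converse, assume $R[x]$ is right $\overline{\sigma}$-skew RNP and take $a,b\in N(R)$ with $aRb=0$. Viewing $a,b$ as constant polynomials in $N(R[x])$, the commutativity of $x$ with $R$ yields $a\bigl(\sum_{k} c_k x^{k}\bigr) b=\sum_{k}(a c_k b)x^{k}=0$, so $aR[x]b=0$. Applying the $\overline{\sigma}$-skew RNP property of $R[x]$ gives $bR[x]\overline{\sigma}(a)=0$; restricting to constant terms and using $\overline{\sigma}(a)=\sigma(a)$ produces $bR\sigma(a)=0$, as required.

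The only step that requires any vigilance is the coefficient bookkeeping in the forward direction: one must check that when writing out $g\,h\,\overline{\sigma}(f)$, every coefficient is indeed a sum of products of the shape $b_j c_k \sigma(a_i)$, which is immediate because $x$ is central in $R[x]$ and $\overline{\sigma}$ acts coefficient-wise. No analogue of Lemma \ref{colosss} is needed, as there is no derivation present; the structural role played by Proposition \ref{prop-indicsigma} in Theorem \ref{AskewRNP} reduces here to the trivial observation that $\overline{\sigma}$ is a well-defined endomorphism of $R[x]$.
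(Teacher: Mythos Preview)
Your proof is correct and matches the paper's approach: the paper states this result as an unproved corollary of Theorem~\ref{AskewRNP}, and what you have written is exactly the specialization of that theorem's argument to $A=R[x]$ with the PBW data $(\Sigma_{\mathrm{PBW}},\Delta)=(\{\mathrm{id}_R\},\{0\})$ and the auxiliary endomorphism $\sigma$ extended coefficient-wise. Your observation that in this degenerate case the $(\Sigma,\Delta)$-compatibility hypothesis and Lemma~\ref{colosss} become unnecessary, while CN and quasi-Armendariz are precisely the $\Sigma$-skew CN and (SQA1) conditions, is exactly the content of the corollary.
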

\begin{proposition}\label{prop-RNP}
If $A$ is a skew PBW extension over a $(\Sigma,\Delta)$-compatible NI ring $R$, then $A$ is nil-reflexive if and only if $A$ is RNP.
\end{proposition}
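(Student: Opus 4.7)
The forward implication is immediate: every RNP ring is nil-reflexive, as recorded in the Introduction, with no special hypothesis on $A$ needed. The content lies in the converse, and my plan is to reduce it to \cite[Proposition 1.6]{Kheradmandetal2017(b)} (stated in the Introduction), which guarantees that for any NI ring $T$ the properties ``nil-reflexive'' and ``RNP'' coincide. Consequently, it suffices to show that the skew PBW extension $A$ is itself NI.

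To establish that $A$ is NI, I invoke \cite[Theorem 4.6]{ReyesSuarez2019-2}: since $(\Sigma,\Delta)$-compatibility implies weak $(\Sigma,\Delta)$-compatibility, that theorem yields the coefficient characterization
\[
f=\sum_{i=0}^{t}a_iX_i\in N(A)\ \Longleftrightarrow\ a_i\in N(R)\ \text{for every }0\le i\le t.
\]
Denote this set by $\mathcal{N}$. As $R$ is NI, $N(R)$ is an ideal of $R$, so $\mathcal{N}$ is closed under addition. For closure of $\mathcal{N}$ under multiplication by an arbitrary $g\in A$, I would expand $fg$ and $gf$ using the commutation rules of Proposition \ref{coefficientes}: every coefficient of the resulting polynomial is a product of an element of $R$ with an expression of the form $\sigma^{\gamma}(a_i)$, $\delta^{\gamma}(a_i)$, or a mixture thereof. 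Ring endomorphisms preserve nilpotence automatically, and the compatibility hypotheses together with Lemma \ref{colosss}(3) yield $\delta_k(N(R))\subseteq N(R)$ for every $k$; combined with the ideal property of $N(R)$, every coefficient of $fg$ and $gf$ lies in $N(R)$. Hence $\mathcal{N}=N(A)$ is a two-sided ideal, i.e., $A$ is NI.

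The main obstacle is the $\delta_k$-invariance of $N(R)$. For $n\in N(R)$ with $n^2=0$, Lemma \ref{colosss}(3) applied to $n\cdot n=0$ (with $\theta=0$, $\beta=e_k$) yields $\delta_k(n)\,n=0$, and $\Delta$-compatibility applied to this identity then gives $\delta_k(n)\,\delta_k(n)=0$, so $\delta_k(n)\in N(R)$. The general case $n^m=0$ follows by induction on $m$, using the iterated Leibniz formula $\delta_k(n^m)=\sum_{i=0}^{m-1}\sigma_k(n)^{i}\,\delta_k(n)\,n^{m-1-i}$ together with further applications of Lemma \ref{colosss}(3) and the compatibility hypotheses to dispatch the extra terms. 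Once $A$ is known to be NI, a direct appeal to \cite[Proposition 1.6]{Kheradmandetal2017(b)} delivers the equivalence between nil-reflexivity and the RNP property, completing the proof.
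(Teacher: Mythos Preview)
Your overall strategy matches the paper's: both proofs reduce to showing that $A$ is NI and then apply the equivalence of nil-reflexivity and RNP valid for NI rings (the paper via \cite[Proposition 2.1]{Kheradmandetal2017(b)} together with $N^{*}(A)=N(A)$, you via the equivalent \cite[Proposition 1.6]{Kheradmandetal2017(b)}). The only real difference lies in how $A$ is shown to be NI: the paper simply invokes \cite[Theorem 3.3]{SuarezChaconReyes2021}, whereas you try to rederive this from the coefficient description of $N(A)$ in \cite[Theorem 4.6]{ReyesSuarez2019-2} and then argue by hand that the set $\mathcal{N}$ is a two-sided ideal.

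That rederivation is correct in outline, but your treatment of $\delta_k(N(R))\subseteq N(R)$ for general $n^m=0$ is loose. The Leibniz identity $\delta_k(n^m)=\sum_{i=0}^{m-1}\sigma_k(n)^{i}\delta_k(n)n^{m-1-i}$ only tells you that a \emph{sum} vanishes, not any individual summand, so it does not by itself yield $\delta_k(n)\in N(R)$. A clean way to close this step is to first prove, from $(\Sigma,\Delta)$-compatibility and Lemma \ref{colosss}, the replacement property ``$p\,n\,q=0\Rightarrow p\,\delta_k(n)\,q=0$'' for arbitrary $p,n,q\in R$ (expand $p\,\delta_k(nq)=0$ and kill the term $p\,\sigma_k(n)\,\delta_k(q)$ using $\Sigma$-compatibility followed by $\Delta$-compatibility); iterating this on $n\cdot n\cdots n=0$ gives $\delta_k(n)^m=0$ directly, which is what you need. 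One cosmetic slip: what you call the ``forward implication'' is actually the converse of the stated equivalence; the trivial direction is RNP $\Rightarrow$ nil-reflexive.
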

\begin{proof}
Let $A$ be a skew PBW extension over $R$. Notice that $R$ is weak $(\Sigma,\Delta)$-compatible. Thus, if $R$ is NI, from \cite[Theorem 3.3]{SuarezChaconReyes2021} we obtain that $A$ is NI, whence $N^{*}(A)=N(A)$. By \cite[Proposition 2.1]{Kheradmandetal2017(b)}, we
have that $A$ is  nil-reflexive if and only if $fAg= 0$ implies $gAf = 0$, for elements $f, g \in N^{*}(A)$. Hence, $fAg= 0$ implies $gAf = 0$, for elements $f, g \in N(A)$. Therefore, $A$ is nil-reflexive if and only if $A$ is  RNP.
\end{proof}
\begin{corollary}\label{cor-nilrefiffSigmaRNP} Let $A$ be a skew PBW extension over an NI ring $R$ and suppose that the conditions established in Proposition \ref{prop-indicsigma} hold. If $A$ is $(\overline{\Sigma},\overline{\Delta})$-compatible, then $A$ is nil-reflexive if and only if $A$ is $\overline{\Sigma}$-skew RNP.
\end{corollary}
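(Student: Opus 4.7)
The plan is to deduce this corollary by chaining two previously established equivalences: Proposition \ref{prop-RNP}, which under $(\Sigma,\Delta)$-compatibility of the base ring identifies nil-reflexivity with the RNP property on a skew PBW extension, and Proposition \ref{prop-RNRiifSigmaRNP}, which under $\Sigma$-compatibility collapses RNP and $\Sigma$-skew RNP into a single class. The only subtle point is that Proposition \ref{prop-RNP} demands $(\Sigma,\Delta)$-compatibility of the ring of coefficients $R$, while the hypothesis in the corollary is stated at the level of $A$ with the families $\overline{\Sigma}$ and $\overline{\Delta}$.

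First I would verify that $(\overline{\Sigma},\overline{\Delta})$-compatibility of $A$ forces $(\Sigma,\Delta)$-compatibility of $R$. Since $R \subseteq A$ and, by Proposition \ref{prop-indicsigma}, the extensions $\overline{\sigma_k}$ and $\overline{\delta_k}$ act on elements of $R$ exactly as $\sigma_k$ and $\delta_k$ do, the compatibility condition on $A$ specializes without change to the corresponding condition on $R$. Combined with the NI hypothesis on $R$, this puts us in the setting of Proposition \ref{prop-RNP}, which yields the first link: $A$ is nil-reflexive if and only if $A$ is RNP.

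Next I would apply Proposition \ref{prop-RNRiifSigmaRNP} with $A$ itself in the role of the ambient ring and $\overline{\Sigma}$ in the role of the family of endomorphisms. This is legitimate because Proposition \ref{prop-indicsigma} guarantees that each $\overline{\sigma_k}$ is an endomorphism of $A$, and $A$ is $\overline{\Sigma}$-compatible by hypothesis. The proposition gives the equivalences: $A$ is RNP $\iff$ $A$ is right $\overline{\Sigma}$-skew RNP $\iff$ $A$ is left $\overline{\Sigma}$-skew RNP. Since by definition $A$ is $\overline{\Sigma}$-skew RNP precisely when both one-sided versions hold, we obtain that $A$ is RNP if and only if $A$ is $\overline{\Sigma}$-skew RNP. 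Chaining this with the equivalence from the previous paragraph closes the argument.

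I do not anticipate any serious obstacle; the proof is essentially a two-step composition of named results. The one point that deserves explicit mention, rather than being swept under the rug, is the transfer of compatibility from $A$ down to $R$, since the hypothesis is stated on $A$ but Proposition \ref{prop-RNP} is phrased in terms of the coefficient ring. Everything else is formal, and no additional computation inside $A$ or with skew PBW commutation formulas is required.
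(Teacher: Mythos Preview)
Your proposal is correct and follows the same route as the paper, which simply cites Propositions \ref{prop-RNRiifSigmaRNP} and \ref{prop-RNP}. The one point you make explicit that the paper leaves implicit---that $(\overline{\Sigma},\overline{\Delta})$-compatibility of $A$ restricts to $(\Sigma,\Delta)$-compatibility of $R$ so that Proposition \ref{prop-RNP} applies---is a genuine detail worth spelling out, and your justification via $\overline{\sigma_k}|_R=\sigma_k$, $\overline{\delta_k}|_R=\delta_k$ is the right one.
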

\begin{proof}
The assertion follows from Propositions \ref{prop-RNRiifSigmaRNP} and \ref{prop-RNP}.
\end{proof}
Next, we recall the following result which describes the Ore localization by regular elements of the ring $R$ over a skew PBW extension.
\begin{proposition} (\cite[Lemma 2.6]{OreGoldieLezama2013}) \label{Lemma2.6}
Let $A = \sigma(R)\langle x_1,\dotsc, x_n\rangle$ be a skew PBW extension over $R$, and let $S$ be the set of regular elements of $R$ such that $\sigma_i(S) = S$, for every $1 \leq i \leq n$, where $\sigma_i$ is defined by Proposition \ref{sigmadefinition}.
\begin{itemize}
    \item[{\rm (1)}] If $S^{-1}R$ exists, then $S^{-1}A$ exists and it is a bijective skew PBW extension over $S^{-1}R$ with $S^{-1}A = \sigma(S^{-1}R)\left \langle x_1', \dots , x_n' \right \rangle$, where $x_i':= \frac{x_i}{1}$ and the system of constants of $S^{-1}R$ is given by $d_{i,j}':=\frac{d_{i,j}}{1}$, $c_{i,\frac{r}{s}}':=\frac{\sigma_i(r)}{\sigma_i(s)}$, for all $1 \leq i, j \leq n$.       
\item[{\rm (2)}] If $RS^{-1}$ exists, then $AS^{-1}$ exists and it is a bijective skew PBW extension over $RS^{-1}$ with $AS^{-1} = \sigma(RS^{-1})\left \langle x_1', \dots , x_n' \right \rangle$, where $x_i'':= \frac{x_i}{1}$ and the system of constants of $S^{-1}R$ is given by $d_{i,j}'':=\frac{d_{i,j}}{1}$, $c_{i,\frac{r}{s}}'':=\frac{\sigma_i(r)}{\sigma_i(s)}$, for all $1 \leq i, j \leq n$. 
\end{itemize}
\end{proposition}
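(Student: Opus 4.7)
The plan is to construct $S^{-1}A$ as a localization in two stages: first verify that $S$, viewed inside $A$, is a left Ore set of regular elements, and then read off the skew PBW structure on the resulting ring. I would prove (1) in detail; (2) follows by a symmetric argument with left/right interchanged.

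First I would establish the left Ore and reversibility conditions for $S$ inside $A$. Regularity of $s\in S$ in $A$ is the easy half: if $sf=0$ with $f=\sum_i a_iX_i\in A$, then comparing coefficients in the standard basis \(\mathrm{Mon}(A)\) and using that products of $s$ with the leading coefficient are (iterated) images of $s$ under the $\sigma_i$'s modulo lower-order terms, hypothesis $\sigma_i(S)=S$ makes those images regular in $R$, forcing $a_i=0$ for every $i$. For the left Ore condition I proceed by induction on the total degree of $f\in A$. The base case $f\in R$ is exactly the hypothesis that $S^{-1}R$ exists. For $f=x_i$, the relation $x_is=\sigma_i(s)x_i+\delta_i(s)$ rewrites as $\sigma_i(s)x_i=x_is-\delta_i(s)$; setting $t=\sigma_i(s)\in S$ and using left Ore in $R$ to absorb $\delta_i(s)$, one obtains an identity of the form $t'x_i=f's$ with $t'\in S$, $f'\in A$. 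The general case follows by iterating this on monomials $x^{\alpha}$ and then by $R$-linearity on sums of monomials, using Proposition \ref{coefficientes} to control the lower-order remainders at each step.

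Once the left Ore property is in hand, the classical Ore localization $S^{-1}A$ exists and comes equipped with the universal map $A\to S^{-1}A$ identifying $R$-fractions with $S^{-1}R$. I then verify that $S^{-1}A$ satisfies the four axioms of Definition \ref{def.skewpbwextensions} with ring of coefficients $S^{-1}R$ and variables $x_i':=x_i/1$. Axiom (1) is immediate; for axiom (2), the key claim is that the monomials $(x')^{\alpha}=x_1'^{\alpha_1}\cdots x_n'^{\alpha_n}$ form a left $S^{-1}R$-basis of $S^{-1}A$: they generate because any fraction $f/s$ can be rewritten using the already-established Ore property as $s'^{-1}g$ with $g\in A$ expanded in $\mathrm{Mon}(A)$, and they are linearly independent because a dependence $\sum(r_k/s_k)(x')^{\alpha_k}=0$ clears to a dependence in $A$ after multiplying through by a common left denominator (obtainable via iterated Ore), contradicting the $R$-freeness of $\mathrm{Mon}(A)$. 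For axioms (3) and (4), I transport the identities $x_ir=\sigma_i(r)x_i+\delta_i(r)$ and $x_jx_i=d_{i,j}x_ix_j+r_0^{(i,j)}+\sum_k r_k^{(i,j)}x_k$ into the localization; the assignments $c'_{i,r/s}=\sigma_i(r)/\sigma_i(s)$ and $d'_{i,j}=d_{i,j}/1$ are then forced by direct computation, once one extends $\sigma_i$ to $S^{-1}R$ by $\sigma_i'(r/s)=\sigma_i(r)/\sigma_i(s)$ and analogously $\delta_i'$.

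Finally, bijectivity of the resulting skew PBW extension reduces to two points. Each $\sigma_i'$ is a bijection of $S^{-1}R$: it is well-defined because $\sigma_i(S)\subseteq S$; it is injective because $\sigma_i$ was already injective by Proposition \ref{sigmadefinition}; and it is surjective precisely because $\sigma_i(S)=S$ allows us to invert denominators, namely $(r/s)=\sigma_i'(\sigma_i^{-1}(r)/\sigma_i^{-1}(s))$ after lifting along the bijection $\sigma_i\colon S\to S$ (and then on $R$ using that $\sigma_i$ restricted to $S$ is a bijection that we can use to pull back). For the invertibility of $d'_{i,j}$ when $i<j$, one uses that $d_{i,j}$ is left invertible in $R$ by Proposition \ref{coefficientes}, and a left inverse in $R$ remains a left inverse in $S^{-1}R$; combined with the symmetric use of the relation $x_ix_j=d_{j,i}x_jx_i+\cdots$ one obtains two-sided invertibility in $S^{-1}R$. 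The main technical obstacle is the Ore condition in step one: carrying the induction through the lower-order remainders $\delta_i(s)$ and the quadratic relations among the $x_i$'s cleanly requires repeatedly invoking the Ore property of $S$ in $R$ and keeping track that every denominator produced remains in $S$, which is precisely where the hypothesis $\sigma_i(S)=S$ is indispensable.
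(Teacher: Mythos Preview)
The paper does not give a proof of this proposition at all; it is quoted as \cite[Lemma~2.6]{OreGoldieLezama2013} and then used as a black box in Theorems~\ref{localizationRNP} and the following one. So there is nothing in the present paper to compare your proposal against.

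That said, your outline follows the expected line of attack (show $S$ is a left denominator set in $A$ by induction on total degree, then verify Definition~\ref{def.skewpbwextensions} in the localization), and the Ore-condition step is sketched correctly. However, your argument for bijectivity has a real gap. For surjectivity of $\sigma_i'$ on $S^{-1}R$ you write $(r/s)=\sigma_i'(\sigma_i^{-1}(r)/\sigma_i^{-1}(s))$, but Proposition~\ref{sigmadefinition} only gives you that $\sigma_i$ is \emph{injective} on $R$; $\sigma_i^{-1}(r)$ need not exist for arbitrary $r\in R$, and the hypothesis $\sigma_i(S)=S$ only lets you invert $\sigma_i$ on $S$, not on all of $R$. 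Similarly, your argument for two-sided invertibility of $d_{i,j}'$ is incomplete: left invertibility in $R$ transfers to $S^{-1}R$, but the ``symmetric relation'' you invoke only gives left invertibility of $d_{j,i}$, not a right inverse for $d_{i,j}$. Both points are handled in the cited reference using that $S$ is the full set of regular elements (so that, for instance, regular elements become units in $S^{-1}R$), and you would need to appeal to that explicitly rather than to a nonexistent $\sigma_i^{-1}$ on $R$.
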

Related with Proposition \ref{Lemma2.6}, if $S$ is a multiplicatively closed subset of a ring $R$ consisting of central regular elements, and $\sigma$ is an automorphism of $R$ such that $\sigma(S) \subseteq S$, then the mapping $ \overline{\sigma}: S^{-1}R \rightarrow S^{-1}R$ defined by $\overline{\sigma}(u^{-1}a) = \sigma(u)^{-1}\sigma(a)$ for $u \in S$ and $a \in R$, induces an endomorphism of $S^{-1}R$. If $\Sigma$ is a set of automorphisms over $R$, we denote $\Sigma_S$ the set of automorphisms over $S^{-1}R$, induced by $\Sigma$.
\begin{theorem}\label{localizationRNP}
Let $R$ be a ring with a finite family of automorphism $\Sigma$ and $S$ a multiplicatively closed subset of $R$ consisting of central regular elements such that $\sigma^{\alpha}(S) \subseteq S$, for every $\alpha\in \mathbb{N}^{n}$. Then, $R$ is right $\Sigma$-skew RNP if and only if $S^{-1}R$ is right $\Sigma_S$-skew RNP.
\end{theorem}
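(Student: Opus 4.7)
The plan is to exploit that $S$ consists of central regular elements so that the Ore localization $S^{-1}R$ behaves as a ``denominator extension'' and the passage back and forth between $R$ and $S^{-1}R$ preserves both the nilpotent structure and orthogonality relations. Two preliminary observations will do most of the work:

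\medskip
\noindent\textbf{Observation 1 (Nilpotents localize).} Because every $s\in S$ is central, $(s^{-1}a)^{k}=s^{-k}a^{k}$ for all $k\geq 1$. Hence $s^{-1}a\in N(S^{-1}R)$ if and only if $a\in N(R)$, and in particular $N(S^{-1}R)=S^{-1}N(R)$.

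\medskip
\noindent\textbf{Observation 2 (Orthogonality descends and lifts).} For $a,b\in R$ and $s,t\in S$, centrality of $S$ yields, for every $u^{-1}r\in S^{-1}R$,
\[
(s^{-1}a)(u^{-1}r)(t^{-1}b)=(sut)^{-1}(arb),
\]
so $(s^{-1}a)(S^{-1}R)(t^{-1}b)=0$ if and only if $aRb=0$. A completely analogous computation gives $(t^{-1}b)(S^{-1}R)\overline{\sigma}^{\alpha}(s^{-1}a)=0$ iff $bR\sigma^{\alpha}(a)=0$, using $\overline{\sigma}^{\alpha}(s^{-1}a)=\sigma^{\alpha}(s)^{-1}\sigma^{\alpha}(a)$ and the hypothesis $\sigma^{\alpha}(S)\subseteq S$.

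\medskip
\noindent\textbf{The two implications.} For the forward direction, assume $R$ is right $\Sigma$-skew RNP. Take $f,g\in N(S^{-1}R)$ with $f(S^{-1}R)g=0$, and write $f=s^{-1}a$, $g=t^{-1}b$ with $s,t\in S$. Observation 1 gives $a,b\in N(R)$, and Observation 2 gives $aRb=0$. By hypothesis, $bR\sigma^{\alpha}(a)=0$ for every nonzero $\alpha\in\mathbb{N}^{n}$, and Observation 2 (applied in reverse) delivers $g(S^{-1}R)\overline{\sigma}^{\alpha}(f)=0$. For the converse, assume $S^{-1}R$ is right $\Sigma_{S}$-skew RNP. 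Given $a,b\in N(R)$ with $aRb=0$, embed them as $a=1^{-1}a$, $b=1^{-1}b$ in $S^{-1}R$; these are nilpotent by Observation 1 and satisfy $a(S^{-1}R)b=0$ by Observation 2. The hypothesis then yields $b(S^{-1}R)\overline{\sigma}^{\alpha}(a)=0$, and restricting to denominators equal to $1$ gives $bR\sigma^{\alpha}(a)=0$.

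\medskip
The main obstacle is essentially bookkeeping: one must verify carefully that the extensions $\overline{\sigma}^{\alpha}$ interact with the localization as expected (which uses $\sigma^{\alpha}(S)\subseteq S$ so that $\sigma^{\alpha}(s)^{-1}$ makes sense in $S^{-1}R$) and that centrality of $S$ really does allow the free movement of denominators through products and through $\overline{\sigma}^{\alpha}$. Once these commutations are in place, both directions reduce to a single translation of the defining equations.
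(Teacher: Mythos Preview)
Your proof is correct and follows essentially the same approach as the paper: both use $N(S^{-1}R)=S^{-1}N(R)$, the centrality of $S$ to combine denominators in products, and regularity of $S$ to pass between vanishing in $S^{-1}R$ and vanishing in $R$. Your packaging into Observations 1 and 2 is slightly cleaner than the paper's version (which introduces an auxiliary element $c\in S$ with $(arb)c=0$ that is in fact unnecessary given regularity), but the underlying argument is the same.
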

\begin{proof}
Suppose that $R$ is a right $\Sigma$-skew RNP and let $s_1^{-1}a,s_2^{-1}b \in N(S^{-1}R)$ such that $(s_1^{-1}a)S^{-1}R(s_2^{-1}b)=0$, that is, $(s_1^{-1}a)s^{-1}r(s_2^{-1}b)=0$, for all $s^{-1}r \in S^{-1}R$. Notice that $N(S^{-1}R)=S^{-1}N(R)$, which implies that $a,b \in N(R)$. In addition, $(s_1ss_2)^{-1}(arb)=(s_1^{-1}a)s^{-1}r(s_2^{-1}b)=0$. By definition of $S^{-1}R$, there is $c\in S$ such that $c$ is a central element of $R$ and $0=(arb)c=a(rc)b$. Since $R$ is right $\Sigma$-skew RNP, it follows that $b(rc)\sigma^{\alpha}(a)=0$, for all nonzero $\alpha$ in $\mathbb{N}^n$. Therefore, we have that $(s_2^{-1}b)(s^{-1}r)\overline{\sigma}^{\alpha}(s_1^{-1}a)=(s_2^{-1}b)(s^{-1}r)\sigma^{\alpha}(s_1)^{-1}\sigma^{\alpha}(a)=(s_2s\sigma^{\alpha}(s_1))^{-1}(br\sigma^{\alpha}(a))=0$, since $b(r)\sigma^{\alpha}(a)c=0$, for each nonzero $\alpha$ in $\mathbb{N}^n$. Therefore, $S^{-1}R$ is right $\Sigma_S$-skew RNP. Reciprocally, let $a,b \in N(R)$ such that $aRb=0$, that is, $arb=0$, for all $r\in R$. We have that $(1^{-1})(arb)=(1^{-1}a)(1^{-1}r)(1^{-1}b)=0$. If $S^{-1}R$ is right $\Sigma_S$-skew RNP, we have $0=(1^{-1}b)(1^{-1}r)\overline{\sigma}^{\alpha}(1^{-1}a)=(1^{-1}b)(1^{-1}r)\sigma^{\alpha}(1)^{-1}\sigma^{\alpha}(a)=(1^{-1})(br\sigma^{\alpha}(a))$. Thus, $br\sigma^{\alpha}(a)=0$, that is, $R$ is right $\Sigma$-skew RNP.    
\end{proof}
\begin{theorem}
Let $A = \sigma(R)\langle x_1,\dotsc, x_n\rangle$ be a bijective skew PBW extension over a $(\Sigma,\Delta)$-compatible ring $R$. Suppose that the conditions established in Proposition \ref{prop-indicsigma} hold, and let $S$ be a multiplicatively closed subset of $R$ consisting of central regular elements such that $\overline{\sigma}^{\alpha}(S) \subseteq S$ and $S^{-1}R$ is a $\overline{\Sigma}_S$-skew CN which satisfies the (SQA1) condition. Then, $A$ is right $\overline{\Sigma}$-skew RNP if and only if $S^{-1}A$ is right $\overline{\Sigma}_S$-skew RNP.
\end{theorem}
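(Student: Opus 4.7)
The plan is to adapt the proof technique of Theorem \ref{localizationRNP} from the base ring $R$ to the skew PBW extension $A$, using Proposition \ref{Lemma2.6} to identify $S^{-1}A$ as a bijective skew PBW extension $\sigma(S^{-1}R)\langle x_1',\dots,x_n'\rangle$ over $S^{-1}R$. The non-trivial point is that the elements of $S$, while central regular in $R$, need not be central in $A$, so one must carefully verify that they remain well-behaved for the Ore localization at the level of $A$. A key preliminary I would establish is that every central regular element $c\in R$ is regular as an element of $A$: if $f=\sum_{i=0}^m a_i X_i\in A$ and $fc=0$, then Proposition \ref{coefficientes} yields that the leading coefficient of $fc$ is $a_m\sigma^{\alpha_m}(c)$; its vanishing together with the $\Sigma$-compatibility of $R$ forces $a_m c=0$, hence $a_m=0$ by regularity of $c$ in $R$, and induction on degree completes the argument (the left-regular version being symmetric).

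For the forward implication, I would assume $A$ is right $\overline{\Sigma}$-skew RNP and take $F=s_1^{-1}f$, $G=s_2^{-1}g$ in $N(S^{-1}A)$ with $F(S^{-1}A)G=0$. Using the $(\Sigma,\Delta)$-compatibility of $R$ lifted to $A$ via Proposition \ref{prop-indicsigma}, one checks $N(S^{-1}A)=S^{-1}N(A)$, so the representatives may be chosen with $f,g\in N(A)$. For any $s^{-1}h\in S^{-1}A$, the product $F(s^{-1}h)G=0$ reduces through the Ore manipulations of the localization to an equation $u^{-1}(fh'g)=0$ with $u\in S$ and $h'\in A$ obtained from $h,s$ by pushing $S$-denominators past the monomials of $A$ using $\overline{\sigma}^{\alpha}(S)\subseteq S$ and the bijectivity of the extension. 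This vanishing yields $(fh'g)c=0$ in $A$ for some $c\in S$, and the preliminary fact gives $fh'g=0$. As $s,h$ vary, $h'$ ranges over all of $A$, so $fAg=0$; the right $\overline{\Sigma}$-skew RNP of $A$ now yields $gA\overline{\sigma}^{\alpha}(f)=0$ for every nonzero $\alpha\in\mathbb{N}^n$, which reverses through the Ore machinery to $G(S^{-1}A)\overline{\sigma}_S^{\alpha}(F)=0$.

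For the reverse direction, I would take $f,g\in N(A)$ with $fAg=0$ and embed them as $f/1$, $g/1\in N(S^{-1}A)$ via the injection $A\hookrightarrow S^{-1}A$ (injective since $1\in S$ is regular in $A$). Any $s^{-1}h\in S^{-1}A$ satisfies $(f/1)(s^{-1}h)(g/1)=u^{-1}(fh'g)=0$ because $fh'g\in fAg=0$, so $(f/1)(S^{-1}A)(g/1)=0$. The right $\overline{\Sigma}_S$-skew RNP of $S^{-1}A$ then produces $(g/1)(S^{-1}A)\overline{\sigma}_S^{\alpha}(f/1)=0$; evaluating at $h/1$ for arbitrary $h\in A$ yields some $c\in S$ with $(gh\overline{\sigma}^{\alpha}(f))c=0$ in $A$, and regularity of $c$ in $A$ gives $gA\overline{\sigma}^{\alpha}(f)=0$. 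The main obstacle I anticipate lies in the Ore bookkeeping of the forward direction: unlike in Theorem \ref{localizationRNP}, where centrality of $S$ in $R$ lets denominators collapse immediately to $(s_1ss_2)^{-1}(arb)$, here one must commute $S$-denominators past monomials $X_i$ using $\overline{\sigma}^{\alpha}(S)\subseteq S$ and the bijectivity of $A$, verifying both that $F(s^{-1}h)G$ reduces to a form preserving $f$ on the left and $g$ on the right, and that the resulting $h'$ sweeps out all of $A$ as $s,h$ vary.
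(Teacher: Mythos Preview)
Your approach is genuinely different from the paper's, and the difference is instructive. The paper does \emph{not} attempt to run the localization argument of Theorem~\ref{localizationRNP} at the level of $A$. Instead it proves the equivalence by a square: it first descends from $A$ to $R$ via Theorem~\ref{AskewRNP} (this is where the (SQA1) and $\Sigma$-skew CN hypotheses are used, after observing that they pass from $S^{-1}R$ down to $R$), then passes from $R$ to $S^{-1}R$ via Theorem~\ref{localizationRNP} (where $S$ is central in $R$, so the Ore bookkeeping is trivial), and finally ascends from $S^{-1}R$ to $S^{-1}A$ by applying Theorem~\ref{AskewRNP} once more to the extension $S^{-1}A=\sigma(S^{-1}R)\langle x_1',\dots,x_n'\rangle$ furnished by Proposition~\ref{Lemma2.6}; the $(\overline{\Sigma},\overline{\Delta})$-compatibility of $S^{-1}R$ needed here is supplied by \cite[Theorem 4.20]{ReyesSuarez2018RUMA}. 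The reverse implication traces the same square in the other direction.

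Your direct route, by contrast, never invokes the (SQA1) or $\Sigma$-skew CN hypotheses at all, which should already be a warning sign. Two of your steps are genuinely problematic. First, the identification $N(S^{-1}A)=S^{-1}N(A)$ does not follow from $(\Sigma,\Delta)$-compatibility as you claim: since $S$ is not central in $A$, the power $(s^{-1}f)^k$ does not reduce to $(s^{k})^{-1}f^k$, and the Ore rewriting produces a complicated element of $A$, not $f^k$; extracting a nilpotent representative of $F$ is exactly the kind of statement that the $\Sigma$-skew CN hypothesis encodes (it lets one control coefficients of nilpotent elements of the extension). Second, in the forward implication you reduce $F(s^{-1}h)G=0$ to $fh'g=0$ and then assert that ``as $s,h$ vary, $h'$ ranges over all of $A$''; this surjectivity is never justified, and the Ore relations $t_1 f=f_1 s$, $t_2(f_1 h)=q s_2$ that appear in the calculation entangle $f$ and $s_2$ into $h'$ in a way that does not obviously produce an arbitrary element sandwiched cleanly between $f$ and $g$. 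The paper's indirect route sidesteps both issues by doing the localization only at the level of $R$, where centrality of $S$ makes everything collapse.
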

\begin{proof}
Suppose $A$ is right $\overline{\Sigma}$-skew RNP. Since $S^{-1}R$ is a $\Sigma_S$-skew CN ring, then $R$ is $\Sigma$-skew CN. Similarly, the condition (SQA1) also transfers from $S^{-1}R$ to $R$. Thus, we have that $R$ is right $\Sigma$-skew RNP, by using Theorem \ref{AskewRNP}. By hypothesis, $S$ be a multiplicatively closed subset of $R$ consisting of central regular elements, which means that $S^{-1}R$ is right $\Sigma_S$-skew RNP, by Theorem \ref{localizationRNP}. In addition, by Proposition \ref{Lemma2.6}, we have that $S^{-1}A$ is a bijective skew PBW extension over $S^{-1}R$, that is, $S^{-1}A = \sigma(S^{-1}R)\left \langle x_1', \dots , x_n' \right \rangle$. Since $R$ is $(\Sigma, \Delta)$-compatible, then $S^{-1}R$ is $(\overline{\Sigma},\overline{\Delta})$-compatible \cite[Theorem 4.20]{ReyesSuarez2018RUMA}. Notice that $S^{-1}A$ is a bijective skew PBW extension over $S^{-1}R$, where $S^{-1}R$ is a $(\overline{\Sigma},\overline{\Delta})$-compatible $\overline{\Sigma}_S$-skew CN ring which satisfies the (SQA1) condition. Hence, since $S^{-1}R$ is right $\Sigma$-skew RNP, then $S^{-1}A$ is a right $\overline{\Sigma}_S$-skew RNP ring, by Theorem \ref{AskewRNP}. Reciprocally, if $S^{-1}A$ is right $\overline{\Sigma}_S$-skew RNP, where $S^{-1}A = \sigma(S^{-1}R)\left \langle x_1', \dots , x_n' \right \rangle$, then $S^{-1}R$ is right $\overline{\Sigma}_S$-skew RNP, because $S^{-1}R$ is $(\overline{\Sigma},\overline{\Delta})$-compatible $\overline{\Sigma}_S$-skew CN and satisfies the condition (SQA1). The above statement follows from Theorem \ref{AskewRNP}. Since $S^{-1}R$ is $\overline{\Sigma}_S$-skew RNP, then $R$ is $\Sigma$-skew RNP. 
Finally, $R$ is a $(\Sigma, \Delta)$-compatible ring and inherits the $\Sigma$-skew CN property and the condition (SQA1) from the ring $S^{-1}R$, which implies that $A$ is a right $\overline{\Sigma}$-skew RNP ring, by using Theorem \ref{AskewRNP}.      
\end{proof}
\section*{Acknowledgments}
Research is supported by Grant HERMES CODE 52464, Department of Mathematics, Faculty of Science, Universidad Nacional de Colombia - Sede Bogot\'a.

\vspace{0.2cm}

\end{document}